\documentclass[11pt]{amsart}

\usepackage{amsfonts,amssymb,amsmath,amsthm,amscd,enumerate}
\usepackage{latexsym}
\usepackage{euscript}

\newtheorem{theorem}{Theorem}[section]

\newtheorem{lemma}[theorem]{Lemma}
\newtheorem{corollary}[theorem]{Corollary}
\newtheorem{proposition}[theorem]{Proposition}
\newtheorem{remark}[theorem]{Remark}

\title{The additive group of a Lie nilpotent associative ring}

\author{Alexei Krasilnikov}

\address{Departamento de Matem\'atica, Universidade de
Bras\'\i lia, 70910-900 Bras\'\i lia, DF, Brazil}

\email{alexei@unb.br}

\date{}

\begin{document}

\maketitle

\begin{abstract}
Let $\mathbb Z \langle X \rangle$ be the free unitary associative
ring freely generated by an infinite countable set $X = \{
x_1,x_2, \dots \}$. Define a left-normed commutator $[x_1,x_2,
\dots , x_n]$ by $[a,b] = ab - ba$, $[a,b,c] = [[a,b],c]$. For $n
\ge 2$, let $T^{(n)}$ be the two-sided ideal in $\mathbb Z \langle
X \rangle$ generated by all commutators $[a_1,a_2, \dots , a_n]$
$( a_i \in \mathbb Z \langle X \rangle )$. It can be easily seen
that the additive group of the quotient ring $\mathbb Z \langle X
\rangle /T^{(2)}$ is a free abelian group. Recently Bhupatiraju,
Etingof, Jordan, Kuszmaul and Li have noted that the additive
group of $\mathbb Z \langle X \rangle /T^{(3)}$ is also free
abelian. In the present note we show that this is not the case for
$\mathbb Z \langle X \rangle /T^{(4)}$. More precisely, let
$T^{(3,2)}$ be the ideal in $\mathbb Z \langle X \rangle$
generated by $T^{(4)}$ together with all elements $[a_1, a_2, a_3]
[a_4, a_5]$ $(a_i \in \mathbb Z \langle X \rangle )$. We prove
that $T^{(3,2)}/T^{(4)}$ is a non-trivial elementary abelian
$3$-group and the additive group of $\mathbb Z \langle X \rangle
/T^{(3,2)}$ is free abelian.
\end{abstract}

\section{Introduction}

Let $\mathbb Z$ be the ring of integers and let $\mathbb Z \langle
X \rangle$ be the free unitary associative ring on the set $X = \{
x_i \mid i \in \mathbb N \}$. Then $\mathbb Z \langle X \rangle$
is the free $\mathbb Z$-module with a basis $\{ x_{i_1} x_{i_2}
\dots x_{i_k} \mid k \ge 0, i_l \in \mathbb N \}$ formed by the
non-commutative monomials in $x_1,x_2, \dots .$ Define a
left-normed commutator $[x_1,x_2, \dots , x_n]$ by $[a,b] = ab -
ba$, $[a,b,c] = [[a,b],c]$. For $n \ge 2$, let $T^{(n)}$ be the
two-sided ideal in $\mathbb Z \langle X \rangle$ generated by all
commutators $[a_1,a_2, \dots , a_n]$ $( a_i \in \mathbb Z \langle
X \rangle )$.

It is clear that the quotient ring $\mathbb Z \langle X \rangle
/T^{(2)}$ is isomorphic to the ring $\mathbb Z [X]$ of commutative
polynomials in $x_1,x_2, \dots$.  Hence, the additive group of
$\mathbb Z \langle X \rangle /T^{(2)}$ is a free abelian group.
Recently Bhupatiraju, Etingof, Jordan, Kuszmaul and Li
\cite{BEJKL12} have noted that the additive group of $\mathbb Z
\langle X \rangle /T^{(3)}$ is also free abelian. In the
present note we show that this is not the case for $\mathbb Z
\langle X \rangle /T^{(4)}$.

Our first result is as follows.

\begin{theorem}\label{theorem1}
Let $v = [x_1,x_2,x_3][x_4,x_5]$. Then $3 \, v \in T^{(4)}$ but $v
\notin T^{(4)}$.
\end{theorem}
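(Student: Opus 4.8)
The plan is to treat the two statements $3v\in T^{(4)}$ and $v\notin T^{(4)}$ separately, after one common reduction. Since the commutator $[a_1,a_2,a_3,a_4]$ is multilinear in its entries, $T^{(4)}$ is spanned by multihomogeneous elements; hence $f\in T^{(4)}$ if and only if every multihomogeneous component of $f$ lies in $T^{(4)}$. Both $v$ and the relevant part of $T^{(4)}$ therefore live in the free $\mathbb Z$-module $P_5$ of elements of $\mathbb Z\langle X\rangle$ that are multilinear of degree one in each of $x_1,\dots,x_5$ (of rank $120$, with a natural $S_5$-action), and $T^{(4)}\cap P_5$ is the $\mathbb Z[S_5]$-submodule generated by all degree-$5$ multilinear elements obtained from length-$4$ left-normed commutators, such as $[x_1,x_2,x_3,x_4]x_5$, $[x_1,x_2,x_3,x_4x_5]$, $[x_1x_5,x_2,x_3,x_4]$ and their permutations.

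For $3v\in T^{(4)}$ I would first record the Leibniz rules $[ab,c]=a[b,c]+[a,c]b$, $[a,bc]=[a,b]c+b[a,c]$ and the Jacobi identity $[a,b,c]+[b,c,a]+[c,a,b]=0$, all valid in any associative ring. Expanding the length-$4$ left-normed commutator $[x_1,x_2,x_3x_4,x_5]\in T^{(4)}$ by the Leibniz rules and discarding the summands that are visibly in $T^{(4)}$ (a monomial times a length-$4$ commutator, or vice versa) yields
\[
[x_1,x_2,x_3][x_4,x_5]\equiv-[x_1,x_2,x_4][x_3,x_5]\pmod{T^{(4)}},
\]
while expanding $[[x_1,x_2,x_3],[x_4,x_5]]$ shows that a length-$3$ and a length-$2$ commutator commute modulo $T^{(4)}$. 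Expanding the remaining length-$4$ commutators carrying one merged pair of variables (e.g. $[x_1,x_2x_3,x_4,x_5]$ and $[x_1x_2,x_3,x_4,x_5]$) gives further congruences among the products $[x_i,x_j,x_k][x_l,x_m]$. Feeding all of these, together with the Jacobi identity, into a finite $S_5$-equivariant linear computation collapses the span of these products modulo $T^{(4)}$ and, I expect, forces $3\,[x_1,x_2,x_3][x_4,x_5]\equiv0\pmod{T^{(4)}}$. The delicate point is to combine the congruences so that the coefficient that appears is exactly $3$ and not $2$ or $6$.

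For $v\notin T^{(4)}$, note first that once $3v\in T^{(4)}$ is known, every $\mathbb Z$-valued linear form vanishing on $T^{(4)}$ also kills $v$, so $v$ must be detected modulo $3$. It therefore suffices to exhibit an associative ring $R$ with $[a,b,c,d]=0$ for all $a,b,c,d\in R$ (so that $R$ is a homomorphic image of $\mathbb Z\langle X\rangle/T^{(4)}$) and elements $r_1,\dots,r_5\in R$ with $[r_1,r_2,r_3][r_4,r_5]\neq0$; such an $R$ automatically has $3$-torsion. I would look for $R$ among (suitable quotients of) group rings of finite $3$-groups $G$ of nilpotency class $2$ with cyclic derived subgroup, for instance built from the non-abelian group of order $27$ and exponent $3$: there one can write down $r_1,\dots,r_5$, compute $[r_1,r_2,r_3]$ and $[r_4,r_5]$ explicitly, and using the identity $(c-1)^3=-3c(c-1)$ in $\mathbb Z[\langle c\rangle]$ show that their product is a nonzero element of order $3$. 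The main obstacle is the last step: one must describe the ideal generated by the length-$4$ commutators precisely enough to see that this product does not lie in it. (An alternative, more representation-theoretic route is to analyse $P_5$ directly as a $\mathbb Z[S_5]$-module, pin down $T^{(4)}\cap P_5$, and show the image of $v$ in $P_5/(T^{(4)}\cap P_5)$ has order exactly $3$; from this viewpoint the $3$-torsion is a shadow of the modular representation theory of $S_5$ at the prime $3$.)
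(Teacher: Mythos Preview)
Your plan for $3v\in T^{(4)}$ is the paper's approach, and the ``delicate point'' you flag dissolves once you organise the congruences correctly. Expanding $[a_1,a_2,a_3a_4,a_5]$ and $[a_5,a_2a_4,a_1,a_3]$ (your two Leibniz computations) yields precisely
\[
[a_1,a_2,a_3][a_4,a_5]\equiv -[a_{\sigma(1)},a_{\sigma(2)},a_{\sigma(3)}][a_{\sigma(4)},a_{\sigma(5)}]\pmod{T^{(4)}}
\]
for $\sigma=(34)$ and $\sigma=(24)$. Together with the trivial sign changes for $(12)$ and $(45)$, this says the product is fully alternating in its five arguments modulo $T^{(4)}$. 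The Jacobi identity $[a_1,a_2,a_3]+[a_2,a_3,a_1]+[a_3,a_1,a_2]=0$ then reads $3v\equiv0$ on the nose --- no further ``$S_5$-equivariant linear computation'' is needed, and the coefficient is exactly $3$.

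For $v\notin T^{(4)}$ the paper does \emph{not} construct an external witness ring; it takes exactly your ``alternative, more representation-theoretic route'', and it is worth seeing how short that turns out to be. Work in the multilinear component $P_5$. After quotienting by the span of the $x_{i_1}[x_{i_2},x_{i_3},x_{i_4},x_{i_5}]$ and of the length-$5$ commutators, the remaining generators of $T^{(4)}\cap P_5$ are exactly the sums $[x_{i_1},x_{i_2},x_{i_3}][x_{i_4},x_{i_5}]+[x_{i_{\sigma(1)}},x_{i_{\sigma(2)}},x_{i_{\sigma(3)}}][x_{i_{\sigma(4)}},x_{i_{\sigma(5)}}]$ for $\sigma=(34),(24)$ --- this is the content of the two expansions above. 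Now send the free abelian group on symbols $h_{i_1i_2i_3i_4i_5}$ to $\mathbb Z$ by the sign of the permutation $k\mapsto i_k$. All the transposition-type relations (both the trivial ones for $(12),(45)$ and the $T^{(4)}$-ones for $(34),(24)$) map to $0$, while the Jacobi relation $h_{i_1i_2i_3i_4i_5}+h_{i_2i_3i_1i_4i_5}+h_{i_3i_1i_2i_4i_5}$ (a genuine identity, hence also in the kernel) maps to $\pm3$. Thus the full preimage of $(T^{(4)}\cap P_5)$ lands in $3\mathbb Z$, whereas $h_{12345}\mapsto1$; so $v\notin T^{(4)}$.

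Your group-ring proposal is a genuinely different route, but the obstacle you name is the whole difficulty. For the extraspecial group $G$ of order $27$ you would need to verify that the chosen (quotient of the) group algebra actually satisfies $[a,b,c,d]=0$ for \emph{all} $a,b,c,d$, not just group elements, while still supporting five elements with $[r_1,r_2,r_3][r_4,r_5]\ne0$; controlling both conditions simultaneously amounts to rebuilding, in a concrete model, the very computation the paper does abstractly in $P_5$. The sign-homomorphism argument sidesteps this entirely and is both elementary and self-contained.
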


Let $T^{(3,2)}$ be the two-sided ideal of the ring $\mathbb Z
\langle X \rangle$ generated by all elements $[a_1, a_2, a_3,
a_4]$ and $[a_1, a_2, a_3] [a_4, a_5]$ where $a_i \in \mathbb Z
\langle X \rangle$. Clearly, $T^{(4)} \subset T^{(3,2)}$.

Note that the ideal $T^{(4)}$ is closed under all substitutions
$x_i \rightarrow a_i$ $(i \in \mathbb N , a_i \in \mathbb Z
\langle X \rangle)$. By Theorem \ref{theorem1} we have $3 \, [a_2,
a_3, a_4] [a_5, a_6] \in T^{(4)}$; it follows that $3 \, a_1 [a_2,
a_3, a_4] [a_5, a_6] a_7 \in T^{(4)}$ for all $a_i \in \mathbb Z
\langle X \rangle$. Thus, we have

\begin{corollary}
$T^{(3,2)}/T^{(4)}$ is a non-trivial elementary abelian $3$-group.
\end{corollary}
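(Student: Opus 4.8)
The plan is to derive the corollary directly from Theorem~\ref{theorem1}, which is the only non-formal ingredient; everything else is bookkeeping. There are two things to prove: that $T^{(3,2)}/T^{(4)}$ is killed by $3$, and that it is non-zero.

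For the first point I would describe $T^{(3,2)}/T^{(4)}$ by generators as an abelian group. Since $\mathbb Z\langle X\rangle$ is unitary, the two-sided ideal $T^{(3,2)}$ is spanned as a $\mathbb Z$-module by the elements $r\,[a_1,a_2,a_3,a_4]\,s$ and $r\,[a_1,a_2,a_3][a_4,a_5]\,s$ with $r,s,a_i\in\mathbb Z\langle X\rangle$. The first family already lies in $T^{(4)}$, so $T^{(3,2)}/T^{(4)}$ is generated as an abelian group by the images of the elements $r\,[a_1,a_2,a_3][a_4,a_5]\,s$; hence it suffices to show $3\,r\,[a_1,a_2,a_3][a_4,a_5]\,s\in T^{(4)}$ for all $r,s,a_i$. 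Here I would invoke two closure properties of $T^{(4)}$: it is closed under every substitution $x_i\mapsto a_i$ (recorded just above), and it is a two-sided ideal. By Theorem~\ref{theorem1}, $3\,[x_1,x_2,x_3][x_4,x_5]\in T^{(4)}$; substituting $x_1\mapsto a_1,\dots,x_5\mapsto a_5$ gives $3\,[a_1,a_2,a_3][a_4,a_5]\in T^{(4)}$, and multiplying by $r$ on the left and $s$ on the right keeps the product in $T^{(4)}$. Therefore $3\,T^{(3,2)}\subseteq T^{(4)}$, so every element of $T^{(3,2)}/T^{(4)}$ has order dividing $3$; being a quotient of an abelian group it is abelian, hence an elementary abelian $3$-group.

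For the second point I would note that $v=[x_1,x_2,x_3][x_4,x_5]$ lies in $T^{(3,2)}$ by definition but, by Theorem~\ref{theorem1}, $v\notin T^{(4)}$. Thus $v+T^{(4)}$ is a non-zero element of $T^{(3,2)}/T^{(4)}$, so the group is non-trivial (and, by the first point, each of its non-zero elements has order exactly $3$). This finishes the proof.

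The main obstacle is not in the corollary itself — the argument above is entirely formal — but in Theorem~\ref{theorem1}, on which it rests. The divisibility $3v\in T^{(4)}$ would require an explicit commutator identity expressing $3\,[x_1,x_2,x_3][x_4,x_5]$ as a $\mathbb Z$-combination of elements $u\,[b_1,b_2,b_3,b_4]\,w$, which I would look for inside the multilinear component of degree $(1,1,1,1,1)$ in $x_1,\dots,x_5$, expanding commutators such as $[x_ix_j,x_k,x_l,x_m]$ by the Leibniz and Jacobi rules. The harder half is $v\notin T^{(4)}$, for which I would construct a concrete associative ring (or, equivalently, a $\mathbb Z$-linear functional on that multilinear component) that vanishes on $T^{(4)}$ but not on $v$, exhibiting a non-zero $3$-torsion value of $[a,b,c][d,e]$ in a ring satisfying the identity $[a,b,c,d]=0$.
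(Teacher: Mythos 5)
Your argument is correct and is essentially the paper's own: the paper likewise derives the corollary from Theorem~\ref{theorem1} by noting that $T^{(4)}$ is closed under substitutions $x_i \mapsto a_i$, so $3\,[a_1,a_2,a_3][a_4,a_5] \in T^{(4)}$ and hence $3\,r\,[a_1,a_2,a_3][a_4,a_5]\,s \in T^{(4)}$, while non-triviality comes from $v \in T^{(3,2)} \setminus T^{(4)}$. The closing remarks about how one would prove Theorem~\ref{theorem1} are not needed here, since that theorem is the given input to the corollary.
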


Our second result is as follows.

\begin{theorem}\label{ZX/T32}
The additive group of the quotient ring $\mathbb Z \langle X
\rangle /T^{(3,2)}$ is free abelian.
\end{theorem}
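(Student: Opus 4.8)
The plan is to reduce to the already-known structure of $R/T^{(3)}$, where $R=\mathbb Z\langle X\rangle$, and then to pin down the extra ``weight-$3$ layer'' by an explicit basis. A preliminary step is the identity $T^{(3,2)}=T^{(4)}+T^{(3)}T^{(2)}$, whose non-trivial inclusion follows from rewritings such as $[a,b,c]\,w\,[d,e]=[a,b,c][wd,e]-\bigl([a,b,c][w,e]\bigr)d$ (and its mirror for $[d,e]\,w\,[a,b,c]$). In $A:=R/T^{(3,2)}$ this yields the working rules: every weight-$3$ commutator $\overline{[a,b,c]}$ is central (since $[a,b,c,d]\in T^{(4)}$), and $\overline{T^{(3)}}\cdot\overline{T^{(2)}}=\overline{T^{(2)}}\cdot\overline{T^{(3)}}=0$, so any product of commutators having a factor of weight $\ge3$ next to a factor of weight $\ge2$ vanishes. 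Since $A$ is graded by degree and $R=\bigcup_n\mathbb Z\langle x_1,\dots,x_n\rangle$, and the substitution $x_{n+1}\mapsto0$ splits the inclusion of the $n$-variable quotient into the $(n+1)$-variable one, it suffices to treat homogeneous components over a finite alphabet; there each piece is a finitely generated abelian group, and ``free'' just means ``torsion-free''.

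Next, the natural surjection $A\to R/T^{(3)}$ has kernel $T^{(3)}/T^{(3,2)}$, and $R/T^{(3)}$ is free abelian by \cite{BEJKL12}; the sequence therefore splits, so it is enough to prove that $T^{(3)}/T^{(3,2)}$ is free abelian. Using $x_ix_j=x_jx_i+[x_i,x_j]$ to collect the variables into a monomial, the Leibniz rule to reduce $[a,b,c]$ (and $[a,b]$) for longer words to commutators of single variables modulo products of shorter commutators, the Jacobi identity to express weight-$3$ commutators through a fixed basis of the degree-$3$ part $\mathrm{Lie}_3$ of the free Lie ring over $\mathbb Z$, a chosen ordering on weight-$2$ commutators, and the vanishing rules above, one checks that $A$ is spanned over $\mathbb Z$ by the images of the commutative monomials, of $w\,[x_i,x_j,x_k]$ with $[x_i,x_j,x_k]$ in that basis of $\mathrm{Lie}_3$, and of $w\,[x_{i_1},x_{j_1}]\cdots[x_{i_m},x_{j_m}]$ ($m\ge1$, index pairs strictly increasing), $w$ ranging over commutative monomials; correspondingly $T^{(3)}/T^{(3,2)}$ is spanned by the $w\,[x_i,x_j,x_k]$ together with those $\mathbb Z$-combinations of products of weight-$2$ commutators that happen to lie in $T^{(3)}$ (for instance $w\bigl([x_i,x_j][x_k,x_\ell]+[x_i,x_\ell][x_j,x_k]\bigr)$).

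The crux is the $\mathbb Z$-linear independence of these normal forms. I would prove it by constructing a homomorphism of $\mathbb Z$-algebras from $R$ into an explicit $\mathbb Z$-algebra $B$ that is free as an abelian group and on which the listed monomials are sent to distinct basis vectors. A natural $B$ is obtained by adjoining to the even-differential-form model of \cite{BEJKL12} for $R/T^{(3)}$ (on which $[x_i,x_j]$ acts as a $2$-form) one further summand $\mathbb Z[X]\otimes_{\mathbb Z}\mathrm{Lie}_3$, annihilated by the augmentation ideal of the form part, to accommodate the central weight-$3$ commutators; its graded ranks can be matched against those of $A\otimes\mathbb Q$, which equals $\mathbb Q\langle X\rangle/T^{(4)}$ (over $\mathbb Q$ the relation $3v\in T^{(4)}$ already puts all $[a,b,c][d,e]$ into $T^{(4)}$) and is classically known. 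Evaluating the homomorphism on the spanning monomials then yields their independence, so they form a basis of $A$ and $A$ is free abelian.

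The step I expect to be the main obstacle is the well-definedness of this homomorphism: one must verify that $[a,b,c,d]$ \emph{and} $[a,b,c][d,e]$ lie in its kernel at the same time. The second relation is exactly the one that is false in $R/T^{(4)}$ by Theorem~\ref{theorem1}, so $B$ must be designed so that the order-$3$ torsion element $v=[x_1,x_2,x_3][x_4,x_5]$ of $T^{(3)}/T^{(4)}$ is forced to zero while no new torsion is created — equivalently, so that the $\mathrm{Lie}_3$-summand receives precisely the weight-$3$ commutators and absorbs none of the (torsion-free) products of weight-$2$ commutators already present in the form model. In the language of the second paragraph, the obstacle is to show that the ``genuinely weight-$3$'' part $\mathbb Z[X]\otimes\mathrm{Lie}_3$ of $T^{(3)}/T^{(3,2)}$ and its complement among the products of weight-$2$ commutators intersect only in $0$; once that is established, each is visibly free abelian and the theorem follows.
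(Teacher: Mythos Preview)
Your outline is sound through the reduction step --- the identity $T^{(3,2)}=T^{(4)}+T^{(3)}T^{(2)}$, the split short exact sequence $0\to T^{(3)}/T^{(3,2)}\to A\to R/T^{(3)}\to 0$, and the description of an integral spanning set are all correct --- but the proof is incomplete precisely where you say it is: you never construct the algebra $B$ nor verify that both families $[a,b,c,d]$ and $[a,b,c][d,e]$ lie in the kernel of the proposed map $R\to B$. Extending the differential-form model of \cite{BEJKL12} by a central $\mathbb{Z}[X]\otimes\mathrm{Lie}_3$ summand with exactly the right multiplication rules is plausible but delicate, and until it is written out the independence of your normal forms remains unproven.

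The paper avoids this obstacle by a device you mention only as a sanity check. It writes down an explicit set $\mathcal{D}$ of normal forms (essentially yours: ordered monomials times $1$, a single $[x_i,x_j]$, a single $[x_i,x_j,x_k]$, or an ordered product of weight-$2$ commutators), verifies that $\mathcal{D}$ spans $A=\mathbb{Z}\langle X\rangle/T^{(3,2)}$ over $\mathbb{Z}$, and then proves $\mathbb{Q}$-linear independence of $\mathcal{D}$ inside $\mathbb{Q}\langle X\rangle/T^{(3,2)}_{\mathbb{Q}}$. For this last step no concrete model is built: independence is imported from Drensky's theorem on bases of relatively free algebras (which reduces the question to the commutator subalgebra modulo the $T$-ideal) together with the known multilinear basis of Gordienko and Volichenko, plus a short linearisation argument for the handful of non-multilinear elements of $\mathcal{D}$. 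Since $T^{(3,2)}\subseteq T^{(3,2)}_{\mathbb{Q}}\cap\mathbb{Z}\langle X\rangle$, independence over $\mathbb{Q}$ forces independence over $\mathbb{Z}$, and $\mathcal{D}$ is a $\mathbb{Z}$-basis. Your own remark that the graded ranks ``can be matched against those of $A\otimes\mathbb{Q}$, which equals $\mathbb{Q}\langle X\rangle/T^{(4)}$ \dots\ and is classically known'' is exactly this mechanism; if you promote it from a consistency check on $B$ to the actual proof of independence, the construction of $B$ becomes unnecessary and the argument is complete.
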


Thus, the additive group of the ring $\mathbb Z \langle X \rangle
/T^{(4)}$ is a direct sum of a non-trivial elementary abelian
$3$-group $T^{(3,2)}/T^{(4)}$ and a free abelian group isomorphic
to $\mathbb Z \langle X \rangle /T^{(3,2)}$.

Let $X_m = \{ x_1, \dots ,x_m \} \subset X $; then $\mathbb Z
\langle X_m \rangle \subset \mathbb Z \langle X \rangle$. Let
$T^{(4)}_m = \mathbb Z \langle X_m \rangle \cap T^{(4)}$,
$T^{(3,2)}_m = \mathbb Z \langle X_m \rangle \cap T^{(3,2)}$. By
Theorem \ref{theorem1}, $T^{(4)}_m \subsetneqq T^{(3,2)}_m$ if $m
\ge 5$.

\begin{proposition}\label{T44T324}
If $m = 2, 3, 4$ then $T^{(4)}_m = T^{(3,2)}_m$. In particular,
for $m \le 4$ the additive group of $\mathbb Z \langle X_m \rangle
/ T^{(4)}_m$ is free abelian.
\end{proposition}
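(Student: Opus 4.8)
The inclusion $T^{(4)}_m\subseteq T^{(3,2)}_m$ is immediate from $T^{(4)}\subseteq T^{(3,2)}$, so the real content is the reverse inclusion, and the plan is to obtain it by reducing modulo the prime $3$ (which, by the Corollary, is the only prime at which $T^{(3,2)}/T^{(4)}$ has torsion). Let $\pi\colon\mathbb Z\langle X_m\rangle\to\mathbb F_3\langle X_m\rangle$ be reduction mod $3$, and let $\overline{T}^{(4)}$ and $\overline{T}^{(3,2)}$ be the two-sided ideals of $\mathbb F_3\langle X_m\rangle$ generated, respectively, by all $[f_1,f_2,f_3,f_4]$, and by these together with all products $[f_1,f_2,f_3][f_4,f_5]$ ($f_i\in\mathbb F_3\langle X_m\rangle$). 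Using the retraction $\mathbb Z\langle X\rangle\to\mathbb F_3\langle X_m\rangle$ that reduces $x_1,\dots,x_m$ mod $3$ and sends $x_{m+1},x_{m+2},\dots$ to $0$, one sees at once that $\pi(T^{(4)}_m)=\overline{T}^{(4)}$ and $\pi(T^{(3,2)}_m)=\overline{T}^{(3,2)}$. I claim it suffices to prove $\overline{T}^{(3,2)}=\overline{T}^{(4)}$. Indeed, for $f\in T^{(3,2)}_m$ we then have $\pi(f)\in\overline{T}^{(3,2)}=\overline{T}^{(4)}=\pi(T^{(4)}_m)$, so $f=g+3h$ with $g\in T^{(4)}_m$ and $h\in\mathbb Z\langle X_m\rangle$; thus $3h=f-g\in T^{(3,2)}_m$. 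By Theorem~\ref{ZX/T32} the group $\mathbb Z\langle X\rangle/T^{(3,2)}$ is free abelian, hence so is its subgroup $\mathbb Z\langle X_m\rangle/T^{(3,2)}_m$; in particular it is torsion free, so $3h\in T^{(3,2)}_m$ forces $h\in T^{(3,2)}_m$, and then the Corollary ($3\,T^{(3,2)}\subseteq T^{(4)}$) gives $3h\in T^{(4)}\cap\mathbb Z\langle X_m\rangle=T^{(4)}_m$. Hence $f=g+3h\in T^{(4)}_m$. The last assertion of the Proposition is then automatic, since $\mathbb Z\langle X_m\rangle/T^{(4)}_m=\mathbb Z\langle X_m\rangle/T^{(3,2)}_m$ embeds additively in the free abelian group $\mathbb Z\langle X\rangle/T^{(3,2)}$, and subgroups of free abelian groups are free abelian.

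So everything comes down to proving, for $m\le4$, that $\overline{T}^{(3,2)}=\overline{T}^{(4)}$ in $\mathbb F_3\langle X_m\rangle$; since $\overline{T}^{(4)}$ is a two-sided ideal this means showing that every product $[f_1,f_2,f_3][f_4,f_5]$ ($f_i\in\mathbb F_3\langle X_m\rangle$) lies in $\overline{T}^{(4)}$, and by $\mathbb F_3$-multilinearity of $(f_1,\dots,f_5)\mapsto[f_1,f_2,f_3][f_4,f_5]$ it is enough to treat the case when the $f_i$ are monomials in $x_1,\dots,x_m$. Here I would work modulo $\overline{T}^{(4)}$ and use the standard congruences valid there: weight-$3$ commutators are central (since $[[a,b,c],d]$ already has weight $4$) and weight-$2$ commutators commute with one another ($[[a,b],[c,d]]=[a,b,c,d]-[a,b,d,c]$). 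A routine induction on degree, expanding the commutator entries via the Leibniz rule, then shows that modulo $\overline{T}^{(4)}$ every such product is an $\mathbb F_3$-combination of elements $w\,[x_a,x_b,x_c][x_d,x_e]\,w'$ with $w,w'$ monomials and $a,b,c,d,e\in\{1,\dots,m\}$. Since $\overline{T}^{(4)}$ is two-sided, it remains only to show
\[
[x_a,x_b,x_c][x_d,x_e]\in\overline{T}^{(4)}\qquad\text{whenever } a,b,c,d,e\in\{1,\dots,m\}\text{ and }m\le4 .
\]

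This is where the hypothesis $m\le4$ is used decisively: among five indices drawn from a set of size at most $4$, two coincide. If $a=b$ then $[x_a,x_b,x_c]=0$, and if $d=e$ then $[x_d,x_e]=0$; using $[x_d,x_e]=-[x_e,x_d]$ and the Jacobi identity $[x_a,x_b,x_c]+[x_b,x_c,x_a]+[x_c,x_a,x_b]=0$, the remaining cases reduce to a short finite list of congruences modulo $T^{(4)}$ in at most four variables, typified by
\[
[x_1,x_2,x_1][x_3,x_4],\qquad [x_1,x_2,x_2][x_3,x_4],\qquad [x_1,x_2,x_3][x_1,x_4]\in\overline{T}^{(4)} .
\]
The hard part will be to verify these: for each such word $u$ one must produce an integral identity $u=t+3g$ with $t\in T^{(4)}$ and $g\in\mathbb Z\langle X\rangle$, exhibiting $u$ over $\mathbb F_3$ as an explicit combination of weight-$4$ commutators. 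One cannot get this by substituting variables into the identity $3[a,b,c][d,e]\in T^{(4)}$ underlying Theorem~\ref{theorem1}, because modulo $3$ that yields only $0=0$; instead one has to re-examine the derivation of that identity and exploit the coincidence of two of the five variables in order to remove the factor $3$. Once this finite list of identities is in hand, the reductions of the two previous paragraphs complete the proof.
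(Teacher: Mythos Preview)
Your reduction to the ``hard part'' is sound, but that is precisely where the content lies, and you have left it open. The identity you need is already in the paper as Lemma~\ref{32+32T4} and its consequence~(\ref{permutation2}): over $\mathbb Z$, for every $\sigma\in S_5$,
\[
[a_1,a_2,a_3][a_4,a_5]\equiv \operatorname{sgn}(\sigma)\,[a_{\sigma(1)},a_{\sigma(2)},a_{\sigma(3)}][a_{\sigma(4)},a_{\sigma(5)}]\pmod{T^{(4)}}.
\]
If two of the five entries coincide, take $\sigma$ to be the transposition swapping their positions: then $f\equiv -f$, i.e.\ $2f\in T^{(4)}$. Over $\mathbb F_3$ this gives $f\equiv 0$ at once, handling your entire ``finite list'' uniformly (the case splitting via Jacobi is not needed). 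But once you see this, the detour through $\mathbb F_3$ and the appeal to Theorem~\ref{ZX/T32} become superfluous: over $\mathbb Z$ you already have $2f\in T^{(4)}$ and, by Lemma~\ref{in}, $3f\in T^{(4)}$, hence $f=3f-2f\in T^{(4)}$ directly.

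This is exactly the paper's route. It proves (Lemma~\ref{T32}) that $T^{(3,2)}=T^{(4)}+I^{(3,2)}$, where $I^{(3,2)}$ is the ideal of $\mathbb Z\langle X\rangle$ generated by the products $[x_{j_1},x_{j_2},x_{j_3}][x_{j_4},x_{j_5}]$ with $j_1<j_2<j_3<j_4<j_5$, i.e.\ with five \emph{distinct} generators. The induction on total degree that you sketch is carried out there over $\mathbb Z$, and the repeated-index base case is dispatched by the $2f,\,3f\in T^{(4)}\Rightarrow f\in T^{(4)}$ argument above. Then one simply applies the retraction $\xi:\mathbb Z\langle X\rangle\to\mathbb Z\langle X_4\rangle$ sending $x_i\mapsto 0$ for $i>4$: since every generator of $I^{(3,2)}$ involves at least five distinct $x_j$, one has $\xi(I^{(3,2)})=0$, so $T^{(3,2)}_4=\xi(T^{(3,2)})=\xi(T^{(4)})+\xi(I^{(3,2)})=T^{(4)}_4$. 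No reduction modulo a prime and no forward reference to Theorem~\ref{ZX/T32} is required; the free-abelian statement for $m\le 4$ then follows from Theorem~\ref{ZX/T32} exactly as you say.
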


Define $\gamma_n = \gamma_n (\mathbb Z \langle X \rangle )$ by
$\gamma_1 = \mathbb Z \langle X \rangle$, $\gamma_{n+1} =
[\gamma_n, \mathbb Z \langle X \rangle ]$ $(n \ge 1)$. Then
$\gamma_n$ is the $n$-th term of the lower central series of
$\mathbb Z \langle X \rangle$ viewed as a Lie ring. Clearly,
$T^{(n)}$ is the two-sided ideal of $\mathbb Z \langle X \rangle$
generated by $\gamma_n$.

\begin{proposition}\label{gamma3gamma4}
Let $w = [x_1 [x_2, x_3, x_4], x_5]$. Then $w \in \gamma_3$, $6 \,
w \in \gamma_4$ but $w \notin \gamma_4$.
\end{proposition}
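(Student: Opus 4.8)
The plan is to prove the three assertions of Proposition~\ref{gamma3gamma4} separately: the non-membership $w\notin\gamma_4$ will come at once from Theorem~\ref{theorem1}, whereas $w\in\gamma_3$ and $6w\in\gamma_4$ are statements that certain elements lie in certain Lie powers of $\mathbb Z\langle X\rangle$, to be handled by commutator identities.

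First I would treat $w\notin\gamma_4$. Since $\gamma_4\subseteq T^{(4)}$, it is enough to show $w\notin T^{(4)}$. Putting $u=[x_2,x_3,x_4]$ and using $[ab,c]=a[b,c]+[a,c]b$ one gets
\[
w=[x_1u,x_5]=[x_1,x_5]\,u+x_1[u,x_5].
\]
Now $x_1[u,x_5]=x_1[x_2,x_3,x_4,x_5]\in T^{(4)}$, while $[x_1,x_5]\,u-u\,[x_1,x_5]=\bigl[[x_1,x_5],u\bigr]\in[\gamma_2,\gamma_3]\subseteq\gamma_5\subseteq T^{(4)}$; hence $w\equiv u\,[x_1,x_5]=[x_2,x_3,x_4][x_1,x_5]\pmod{T^{(4)}}$. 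By Theorem~\ref{theorem1} (with the variables permuted) the right-hand side is not in $T^{(4)}$, so $w\notin T^{(4)}$, and a fortiori $w\notin\gamma_4$.

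For $w\in\gamma_3$ the key difficulty is that in the expansion $w=[x_1,x_5][x_2,x_3,x_4]+x_1[x_2,x_3,x_4,x_5]$ neither summand lies even in $\gamma_2$, so no term-by-term argument is available and one must exhibit the cancellation of the two ``$\gamma_2$-defects'' explicitly. Since $\gamma_3$ and $\gamma_4$ are multidegree-homogeneous and $w$ is multilinear of degree $5$, the computation takes place in the multilinear component $V$ of $\mathbb Z\langle x_1,\dots,x_5\rangle$, a free $\mathbb Z$-module of rank $120$, in which $\gamma_3\cap V$ is spanned by the commutators $[[m_1,m_2],m_3]$ with $m_1,m_2,m_3$ nonempty monomials whose letters partition $\{x_1,\dots,x_5\}$. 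I would write $[x_2,x_3,x_4]=[[x_2,x_3],x_4]$ and apply the Leibniz rules $[ab,c]=a[b,c]+[a,c]b$, $[a,bc]=[a,b]c+b[a,c]$ and the Jacobi identity $[a,[b,c]]=[a,b,c]-[a,c,b]$ repeatedly, pushing all brackets outward, until $w$ is displayed as an explicit $\mathbb Z$-linear combination of triple commutators of monomials.

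Finally, $6w\in\gamma_4$. From the first step together with Theorem~\ref{theorem1}, $3w\equiv 3[x_2,x_3,x_4][x_1,x_5]\equiv0\pmod{T^{(4)}}$, and since $w\in\gamma_3$ this gives $3w\in\gamma_3\cap T^{(4)}$. To conclude I would pass from $3w$ to $6w$ either by establishing $2g\in\gamma_4$ for every $g\in\gamma_3\cap T^{(4)}$ of degree $5$, or, more directly, by exhibiting $6w$ itself as a $\mathbb Z$-linear combination of left-normed $4$-fold commutators of monomials (again a finite computation inside $V$). Here too a splitting $w=A+B$, $A=[x_1,x_5][x_2,x_3,x_4]$, $B=x_1[x_2,x_3,x_4,x_5]$, is useless: no nonzero multiple of $A$ or of $B$ lies in $\gamma_2$, so the cancellation between $A$ and $B$ is essential, and the factor $6=2\cdot3$ encodes the $3$-torsion provided by Theorem~\ref{theorem1} together with residual $2$-torsion in $\gamma_3/\gamma_4$ in this degree. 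I expect the production of these explicit integral commutator identities --- equivalently, carrying out the $\mathbb Z$-linear algebra in the rank-$120$ module $V$ --- to be the main technical obstacle.
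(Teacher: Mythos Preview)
Your argument for $w\notin\gamma_4$ is exactly the paper's: expand $w$ via the Leibniz rule, discard the term $x_1[x_2,x_3,x_4,x_5]\in T^{(4)}$, and invoke Theorem~\ref{theorem1} (up to a permutation of variables and commuting the two factors modulo $T^{(4)}$) on the remaining product $[x_1,x_5][x_2,x_3,x_4]$.

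For $w\in\gamma_3$ and $6w\in\gamma_4$ the paper takes a different and much shorter route: it simply cites identity~(14) of Feigin--Shoikhet \cite{FS07} for the first assertion and the proof of \cite[Lemma~6.1]{BJ10} for the second. Your plan to produce explicit $\mathbb Z$-linear commutator identities inside the rank-$120$ multilinear component would in principle succeed and has the merit of being self-contained, but you have not actually carried out either computation, and your first alternative for $6w\in\gamma_4$ --- that $2g\in\gamma_4$ for every $g\in\gamma_3\cap T^{(4)}$ of degree~$5$ --- is itself an unproved structural claim that would require its own argument. In effect both the paper and you defer the two positive membership statements: the paper to specific places in the literature, you to a finite linear-algebra computation left unexecuted.
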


Thus, $w + \gamma_4$ is a non-trivial element of finite order
(dividing 6) of the additive group of the quotient $\gamma_3
/\gamma_4$.

Let $f \in \mathbb Z \langle X \rangle$ be a multihomogeneous
polynomial. It was conjectured in \cite{BEJKL12} that if
$f + \gamma_{l+1}$ is a torsion element in $\gamma_l / \gamma_{l+1}$
then the degree of $f$ is at least $l+3$ (Conjecture 5.3) and the
degree of $f$ with respect to each generator $x_j$ is a multiple
of the order of $f + \gamma_{l+1}$ (Conjecture 5.2). Since $w$ is of
degree $5$ and has degree $1$ with respect to each $x_i$, $1 \le i
\le 5$, Proposition \ref{gamma3gamma4} gives a counter-example to
these conjectures.

\begin{proof}[Proof of Proposition \ref{gamma3gamma4}]
By the identity (14) of \cite{FS07}, we have $w \in \gamma_3$. It
can be deduced from the proof of \cite[Lemma 6.1]{BJ10} that $6 w
\in \gamma_4$. On the other hand, $w = x_1 [x_2, x_3, x_4, x_5] +
[x_1,x_5] [x_2, x_3, x_4] \notin T^{(4)}$ because $x_1 [x_2, x_3,
x_4, x_5] \in T^{(4)}$ and, by Theorem \ref{theorem1}, $[x_1,x_5]
[x_2, x_3, x_4] \notin T^{(4)}$. Since $\gamma_4 \subset T^{(4)}$,
we have $w \notin \gamma_4$, as required.
\end{proof}

\begin{remark}
{ \rm
The proof of Lemma \ref{in} below shows that the reason behind the
existence of $3$-torsion in the additive group of $\mathbb Z
\langle X \rangle / T^{(4)}$ is the Jacobi identity. For this
reason one might expect the structure of the additive group
of $\mathbb Z \langle X \rangle / T^{(n)}$ for arbitrary $n > 4$
to be similar to that of $\mathbb Z \langle X \rangle / T^{(4)}$, that is,
there should be an ideal $I^{(n)} \subset \mathbb Z \langle X \rangle$
such that $T^{(n)} \subset I^{(n)}$, the quotient $I^{(n)} /
T^{(n)}$ being an elementary abelian $3$-group (possibly trivial for
some $n$) and the additive group of $\mathbb Z \langle X \rangle /
I^{(n)}$ being free abelian. This might also suggest that the
counter-example to Conjectures 5.2 and 5.3 of \cite{BEJKL12} given
in Proposition \ref{gamma3gamma4} is in a certain sense
exceptional, and that it may be possible to modify the conjectures
slightly so that they would be true.
}
\end{remark}

\section{Proof of Theorem 1.1}

The following lemma is well-known (see, for instance,
\cite[Theorem 3.4]{EKM09}, \cite[Lemma 1]{Gordienko07},
\cite[Lemma 2]{Latyshev65}) but we prove it here in order to have
the paper more self-contained.
\begin{lemma}\label{32+32T4}
For all $a_1, \dots ,a_5 \in \mathbb Z \langle X \rangle$,
\begin{equation}\label{32+32T4-1}
[a_{1},a_{2},a_{3}][a_{4},a_{5}] +
[a_{1},a_{2},a_{4}][a_{3},a_{5}] \in T^{(4)},
\end{equation}
\[
[a_{1},a_{2},a_{3}][a_{4},a_{5}] +
[a_{1},a_{4},a_{3}][a_{2},a_{5}] \in T^{(4)}.
\]
\end{lemma}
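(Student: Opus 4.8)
The plan is to compute everything modulo $T^{(4)}$, using only the defining fact that every left-normed commutator $[u_1,u_2,u_3,u_4]$ with $u_i\in\mathbb Z\langle X\rangle$ lies in $T^{(4)}$ --- so, in particular, $[b,u]\in T^{(4)}$ for any weight-$3$ left-normed commutator $b=[c_1,c_2,c_3]$ and any $u$, i.e. such $b$ are central modulo $T^{(4)}$ --- together with the Leibniz rules $[uv,w]=u[v,w]+[u,w]v$ and $[u,vw]=[u,v]w+v[u,w]$.

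For the first inclusion I would start from the remark that $[a_1,a_2,a_3a_4,a_5]$ is a left-normed commutator of length $4$, hence lies in $T^{(4)}$. Two applications of the Leibniz rule give
\[
[a_1,a_2,a_3a_4,a_5]=[a_1,a_2,a_3][a_4,a_5]+[a_1,a_2,a_3,a_5]\,a_4+a_3\,[a_1,a_2,a_4,a_5]+[a_3,a_5][a_1,a_2,a_4].
\]
The left-hand side and the two middle terms on the right lie in $T^{(4)}$, so $[a_1,a_2,a_3][a_4,a_5]+[a_3,a_5][a_1,a_2,a_4]\in T^{(4)}$; since $[a_1,a_2,a_4]$ is central modulo $T^{(4)}$, this is the first inclusion.

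For the second inclusion I would apply the same device to the substitution $a_2\mapsto a_2a_4$, i.e. start from $[a_1,a_2a_4,a_3,a_5]\in T^{(4)}$. Expanding $[a_1,a_2a_4]=[a_1,a_2]a_4+a_2[a_1,a_4]$ and applying Leibniz successively for $[\,\cdot\,,a_3]$ and then $[\,\cdot\,,a_5]$, one checks that every term containing a length-$4$ commutator drops out modulo $T^{(4)}$, and (after moving the weight-$3$ commutators to the left by centrality) what remains is
\[
[a_1,a_2,a_3][a_4,a_5]+[a_1,a_4,a_3][a_2,a_5]+[a_4,a_3,a_5][a_1,a_2]+[a_1,a_2,a_5][a_4,a_3]+[a_1,a_4,a_5][a_2,a_3]+[a_2,a_3,a_5][a_1,a_4]\in T^{(4)}.
\]
The first two summands form the expression in the statement, so it suffices to show that the sum of the last four lies in $T^{(4)}$. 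To this end I would use the already proved first inclusion (together with $[a,b]=-[b,a]$ and $[a,b,c]=-[b,a,c]$) to rewrite every product of a weight-$3$ and a weight-$2$ commutator, modulo $T^{(4)}$, as a signed copy of a single canonical representative depending only on the (unordered) pair of indices in the innermost bracket of the weight-$3$ factor; then I would produce linear relations among these representatives by running the same Leibniz expansion on a couple of elements of the form $[a_ia_j,a_k,a_l,a_m]\in T^{(4)}$, the product now sitting in the innermost position; and finally I would combine those relations to see that the four leftover summands cancel.

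The first inclusion and the mechanical Leibniz expansions are routine. The real work is the last step of the second inclusion: selecting suitable instances of $[a_ia_j,a_k,a_l,a_m]\in T^{(4)}$ and keeping track of signs so that the relations among the canonical representatives close up the argument. As the remark in the introduction anticipates, the Jacobi identity is precisely what is being used there, and it is the same mechanism responsible for the $3$-torsion in $\mathbb Z\langle X\rangle/T^{(4)}$.
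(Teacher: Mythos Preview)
Your treatment of the first inclusion is exactly the paper's: expand $[a_1,a_2,a_3a_4,a_5]\in T^{(4)}$ by Leibniz, drop the two length-$4$ pieces, and use that weight-$3$ commutators are central modulo $T^{(4)}$.

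For the second inclusion your plan is workable but takes a detour. You expand $[a_1,a_2a_4,a_3,a_5]$ and are left with four extra terms that the first inclusion alone cannot kill, so you propose a second Leibniz expansion of an element of the form $[a_ia_j,a_k,a_l,a_m]$. That does work: such an expansion, after two applications of the first inclusion, yields the new relation
\[
[b_1,b_2,b_3][b_4,b_5]+[b_4,b_2,b_3][b_1,b_5]\in T^{(4)},
\]
the ``$(1,4)$-swap''. With this in hand your four leftover terms cancel in pairs (terms with innermost brackets $\{3,4\}$ and $\{2,3\}$ go together, as do $\{1,2\}$ and $\{1,4\}$). In fact, once you have the $(1,4)$-swap, composing it with the trivial $(1,2)$-swap already gives the $(2,4)$-swap, which \emph{is} the second inclusion --- so your initial expansion of $[a_1,a_2a_4,a_3,a_5]$ was unnecessary.

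The paper avoids this detour by a better choice of the element to expand: it uses $[a_5,a_2a_4,a_1,a_3]$ (same slot for the product, different labels in the other three slots). With that choice, after dropping length-$4$ terms one gets six products, and the first inclusion immediately kills four of them in two pairs; the two survivors are precisely $[a_2,a_1,a_3][a_5,a_4]+[a_5,a_2][a_4,a_1,a_3]$, which after sign changes is the second inclusion. So the paper needs only one Leibniz expansion rather than two.

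One small correction: the Jacobi identity is not used anywhere in this lemma (neither in your argument nor in the paper's). It enters only in the next step, Lemma~\ref{in}, where it combines with the full $S_5$-antisymmetry to produce the factor $3$.
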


\begin{proof}
Since $[a,bc]=b[a,c] + [a,b]c$, $[ab,c] = a[b,c] + [a,c]b$, we
have
\begin{multline*}
[a_1,a_2,a_3a_4,a_5] = [a_3[a_1,a_2,a_4] + [a_1,a_2,a_3]a_4, a_5]
\\
= a_3[a_1,a_2,a_4,a_5] + [a_3,a_5][a_1,a_2,a_4] +
[a_1,a_2,a_3][a_4,a_5] + [a_1,a_2,a_3,a_5]a_4 .
\end{multline*}
It is clear that $[a_1,a_2,a_3a_4,a_5],\ a_3[a_1,a_2,a_4,a_5],\
[a_1,a_2,a_3,a_5]a_4  \in T^{(4)}$ so $[a_3,a_5][a_1,a_2,a_4] +
[a_1,a_2,a_3][a_4,a_5] \in T^{(4)}$. Further, $[[a_1,a_2,a_4],
[a_3,a_5]] = [a_1,a_2,a_4,a_3,a_5] - [a_1,a_2,a_4,a_5,a_3] \in
T^{(4)}$ so
\begin{multline*}
[a_{1},a_{2},a_{3}][a_{4},a_{5}] +
[a_{1},a_{2},a_{4}][a_{3},a_{5}] \\
= [a_3,a_5][a_1,a_2,a_4] + [a_1,a_2,a_3][a_4,a_5] +
[[a_1,a_2,a_4], [a_3,a_5]] \in T^{(4)},
\end{multline*}
as required.

Note also that
\begin{multline*}
[a_5,a_2a_4,a_1,a_3] = [a_2[a_5,a_4] + [a_5,a_2]a_4, a_1, a_3] \\
= [a_2[a_5,a_4,a_1] + [a_2,a_1][a_5,a_4] + [a_5,a_2][a_4,a_1] +
[a_5,a_2,a_1]a_4 , a_3] \\
= a_2[a_5,a_4,a_1,a_3] + [a_2,a_3][a_5,a_4,a_1] +
[a_2,a_1][a_5,a_4,a_3] + [a_2,a_1,a_3][a_5,a_4] \\
+ [a_5,a_2][a_4,a_1,a_3] + [a_5,a_2,a_3][a_4,a_1] +
[a_5,a_2,a_1][a_4,a_3] + [a_5,a_2,a_1,a_3]a_4.
\end{multline*}
It is clear that $[a_5,a_2a_4,a_1,a_3], a_2[a_5,a_4,a_1,a_3],
[a_5,a_2,a_1,a_3]a_4 \in T^{(4)}$. Also, by (\ref{32+32T4-1}),
$[a_2,a_3][a_5,a_4,a_1] + [a_2,a_1][a_5,a_4,a_3] \in T^{(4)}$ and 
$[a_5,a_2,a_3][a_4,a_1] + [a_5,a_2,a_1][a_4,a_3] \in T^{(4)}$. It
follows that $[a_2,a_1,a_3][a_5,a_4] + [a_5,a_2][a_4,a_1,a_3] \in
T^{(4)}$, therefore $[a_1,a_2,a_3][a_4,a_5] + [a_1,a_4,a_3]
[a_2,a_5] \in T^{(4)}$, as required.
\end{proof}

The following lemma is also well-known (see, for instance,
\cite{EKM09}, \cite[Lemma 1]{Gordienko07}, \cite[Lemma
1]{Volichenko78}).

\begin{lemma}\label{in}
For all $a_1, \dots ,a_5 \in \mathbb Z \langle X \rangle$, we 
have $3 \, [a_1,a_2,a_3][a_4,a_5] \in T^{(4)}$.
\end{lemma}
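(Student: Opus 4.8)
The goal is to show $3\,[a_1,a_2,a_3][a_4,a_5] \in T^{(4)}$. The plan is to combine the two congruences of Lemma \ref{32+32T4} with a third one obtained by swapping yet another pair of entries, so that three copies of the ``same'' product $[a_1,a_2,a_3][a_4,a_5]$ accumulate modulo $T^{(4)}$ while the error terms cancel thanks to the Jacobi identity. Concretely, one works modulo $T^{(4)}$ throughout, writing $u \equiv v$ for $u - v \in T^{(4)}$, and keeps track of the three-fold commutator product $[\,\cdot,\cdot,\cdot\,][\,\cdot,\cdot\,]$ as a function of which of the five slots hold $a_1,\dots,a_5$.

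First I would record, from Lemma \ref{32+32T4}, the antisymmetry relations
\[
[a_1,a_2,a_3][a_4,a_5] \equiv -[a_1,a_2,a_4][a_3,a_5], \qquad
[a_1,a_2,a_3][a_4,a_5] \equiv -[a_1,a_4,a_3][a_2,a_5]
\]
modulo $T^{(4)}$; the second one, after relabeling, also gives antisymmetry in the pair of outer-bracket slots, and together these say that modulo $T^{(4)}$ the expression $[a_{\sigma(1)},a_{\sigma(2)},a_{\sigma(3)}][a_{\sigma(4)},a_{\sigma(5)}]$ changes sign under the transpositions $(3\,4)$, $(2\,4)$, and hence (composing) under $(2\,3)$ and $(4\,5)$ as well. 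So modulo $T^{(4)}$ the product is alternating in the appropriate sense on the relevant indices. Next I would invoke the Jacobi identity in the form
\[
[a_1,a_2,a_3] + [a_2,a_3,a_1] + [a_3,a_1,a_2] = 0,
\]
which holds identically in $\mathbb Z\langle X\rangle$, multiply it on the right by $[a_4,a_5]$, and then rewrite each of the three terms $[a_2,a_3,a_1][a_4,a_5]$ and $[a_3,a_1,a_2][a_4,a_5]$ using the sign rules from the previous step to bring the inner bracket back to the form $[a_1,a_2,a_3]$ (or its negative). The key point is that each cyclic permutation $(1\,2\,3)$ of the first three slots is a product of two transpositions, hence acts trivially on sign — so all three terms become $+[a_1,a_2,a_3][a_4,a_5]$ modulo $T^{(4)}$, yielding $3\,[a_1,a_2,a_3][a_4,a_5] \equiv 0$.

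The step I expect to be the main obstacle is the bookkeeping in the middle: one must be careful that the ``antisymmetry in slot pairs $(3\,4)$ and $(2\,4)$'' genuinely propagates to antisymmetry under $(2\,3)$ acting on the inner bracket, since a priori Lemma \ref{32+32T4} only swaps one inner-bracket slot with one outer-bracket slot. The cleanest route is probably to note that swapping $a_2 \leftrightarrow a_3$ can be realized as the composite $(2\,4)(3\,4)(2\,4)$ of three of the allowed swaps (each changing sign), so the composite changes sign as well, and then check that the Jacobi relation, rewritten via these sign rules, really does produce $3$ equal terms rather than $1$ or $-1$ — a short but error-prone computation that I would carry out explicitly. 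Once that is in hand the conclusion $3\,[a_1,a_2,a_3][a_4,a_5] \in T^{(4)}$ is immediate, and Theorem \ref{theorem1}'s ``$3v \in T^{(4)}$'' half follows by specializing $a_i = x_i$.
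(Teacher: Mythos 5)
Your argument is essentially the paper's proof: the two congruences of Lemma \ref{32+32T4} give sign changes modulo $T^{(4)}$ under the slot transpositions $(3\,4)$ and $(2\,4)$, and combining these with the Jacobi identity multiplied on the right by $[a_4,a_5]$ (the cyclic permutation of the first three slots being even) yields $3\,[a_1,a_2,a_3][a_4,a_5]\in T^{(4)}$, exactly as in the paper. One small repair to your bookkeeping: to write $(1\,2\,3)$ as a product of two sign-changing transpositions you must also invoke the trivial antisymmetries $[a_1,a_2,a_3]=-[a_2,a_1,a_3]$ and $[a_4,a_5]=-[a_5,a_4]$ (the paper's cases $\sigma=(1\,2)$ and $\sigma=(4\,5)$), since the transpositions you list, $(3\,4)$, $(2\,4)$, $(2\,3)$, never move slot $1$, and $(4\,5)$ is not obtainable by composing or relabeling the Lemma \ref{32+32T4} congruences but only from this trivial identity.
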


\begin{proof}
It is clear that if $\sigma = (12)$ or $\sigma = (45)$ then, for
all $a_1, \dots , a_5 \in \mathbb Z \langle X \rangle$,
\[
[a_{1}, a_{2}, a_{3}] [a_{4}, a_{5}] = - [a_{\sigma (1)},
a_{\sigma (2)}, a_{\sigma (3)}] [a_{\sigma (4)}, a_{\sigma (5)}] .
\]
On the other hand, if $\sigma = (34)$ or $\sigma = (24)$ then, by
Lemma \ref{32+32T4},
\[
[a_{1}, a_{2}, a_{3}] [a_{4}, a_{5}] \equiv - [a_{\sigma (1)},
a_{\sigma (2)}, a_{ \sigma (3)}] [a_{\sigma (4)}, a_{\sigma (5)}]
\pmod{T^{(4)}} .
\]
Since the transpositions $(12)$, $(45)$, $(34)$ and $(24)$
generate the entire group $S_5$ of the permutations of the set $\{
1,2,3,4,5 \}$, for all $\sigma \in S_5$ we have
\begin{equation}\label{permutation2}
[a_{1}, a_{2}, a_{3}] [a_{4}, a_{5}] \equiv  \mbox{sgn} (\sigma )
[a_{\sigma (1)}, a_{\sigma (2)}, a_{\sigma (3)}] [a_{\sigma (4)},
a_{\sigma (5)}] \pmod{T^{(4)}} .
\end{equation}

Now note that, by the Jacobi identity,
\[
[a_1, a_2, a_3] [a_4, a_5] + [a_2, a_3, a_1] [a_4, a_5] + [a_3,
a_1, a_2] [a_4, a_5] = 0.
\]
By (\ref{permutation2}), this equality implies $ 3 \, [a_1, a_2,
a_3] [a_4, a_5] \in T^{(4)}$ for all $a_1, \dots , a_5 \in \mathbb
Z \langle X \rangle$, as required. 
\end{proof}

\begin{remark}
{\rm
It is known (see, for instance, \cite[Lemma 1]{Volichenko78}) that  
\begin{multline*}
[a_1, a_2, a_3] [a_4, \dots , a_n, a_{n+1}] \\ 
\equiv \mbox{sgn} (\sigma ) [a_{\sigma (1)}, a_{\sigma (2)}, a_{\sigma (3)}] [a_4, 
\dots , a_n, a_{\sigma (n+1)}] \pmod{T^{(n)}}
\end{multline*}
for each $n \ge 4$, all $a_i \in \mathbb Z \langle X \rangle$ and 
all permutations $\sigma$ of the set $\{ 1, 2, 3, (n+1) \}$.
The proof is similar to that of (\ref{permutation2}). It follows that 
\[
3 \, [a_1, a_2, a_3] [a_4, \dots , a_n, a_{n+1}] \in T^{(n)}
\]
for all $n \ge 4$ and all $a_i \in \mathbb Z \langle X \rangle$.
}
\end{remark}

By Lemma \ref{in}, we have $3 \, v \in T^{(4)}$. To prove Theorem 
\ref{theorem1} it remains to prove the following.

\begin{lemma}\label{notin}
$v \notin T^{(4)}$.
\end{lemma}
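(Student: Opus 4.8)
## Proof proposal for Lemma \ref{notin}

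The plan is to construct an explicit linear functional on $\mathbb Z \langle X \rangle$ (or on a suitable multihomogeneous component of it) that vanishes on $T^{(4)}$ but not on $v = [x_1,x_2,x_3][x_4,x_5]$. Since $T^{(4)}$ is spanned by elements of the form $u_0 [a_1,a_2,a_3,a_4] u_1$ with $u_0, u_1$ monomials, and since $v$ is multilinear of degree $5$ in $x_1, \dots, x_5$, it suffices to work inside the multilinear component $V_5 \subset \mathbb Z \langle X_5 \rangle$ spanned by the $120$ monomials $x_{\sigma(1)} \cdots x_{\sigma(5)}$, $\sigma \in S_5$. Let $W_5 = V_5 \cap T^{(4)}$. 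I would show $v \notin W_5$ by exhibiting a $\mathbb Z$-linear map $\varphi \colon V_5 \to A$, for a suitable abelian group $A$ (likely $\mathbb Z$ or $\mathbb Z/3$ will not do — one wants $\mathbb Z$ or $\mathbb Z/2$, since $3v \in T^{(4)}$ forces $\varphi(v)$ to be $3$-torsion, so the target should be a group where we can still detect $v$, e.g.\ $\mathbb Z/3$ after all, or a free group where $\varphi(v)$ is a nonzero multiple of $3$), with $\varphi(W_5) = 0$ and $\varphi(v) \neq 0$.

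Concretely, I expect the cleanest route is the one suggested by the structure of the paper: identify $V_5 / W_5$ with a quotient of the group algebra $\mathbb Z[S_5]$ and pin down the image of $v$ using the symmetry relations already proved. By Lemma \ref{32+32T4} and the sign relations in the proof of Lemma \ref{in}, modulo $T^{(4)}$ every element $[a_{\sigma(1)},a_{\sigma(2)},a_{\sigma(3)}][a_{\sigma(4)},a_{\sigma(5)}]$ equals $\operatorname{sgn}(\sigma)\, v$, and the Jacobi relation gives $3v \equiv 0$; so the subgroup of $V_5/W_5$ generated by the images of all such "$\gamma_3 \cdot \gamma_2$" products is cyclic of order dividing $3$. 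The task reduces to proving this cyclic group has order exactly $3$, i.e.\ that $v$ is not already killed by the relations coming from $\gamma_4 \cdot \gamma_1$-type generators and products like $[a_1,a_2][a_3,a_4,a_5]$ (which lie in $T^{(4)}$ for a different reason) inside $V_5$. I would do this by a direct linear-algebra computation: write down a spanning set for $W_5$ as a subspace of $\mathbb Q[S_5]\cong V_5\otimes\mathbb Q$ together with its "integral refinement", decompose $\mathbb Q[S_5]$ into its isotypic components, and check that the image of $v$ lands, with a coefficient coprime to $3$ before reduction, in an isotypic component (almost certainly the sign representation, or the standard one) on which the $\mathbb Z$-span of the relations has index exactly $3$. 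Equivalently, and perhaps more transparently, one evaluates a concrete homomorphism into a small non-commutative ring: substitute the $x_i$ by carefully chosen matrices over $\mathbb Z/3$ (or over $\mathbb Z$) for which every fourth commutator $[A_1,A_2,A_3,A_4]$ vanishes but $[A_1,A_2,A_3][A_4,A_5]$ does not — for instance, matrices in a unipotent group of the right nilpotency class, where fourth Lie commutators die but the product of a third commutator and a bracket survives.

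The main obstacle is the construction of such a test algebra (or the verification that the relevant index is exactly $3$ and not $1$): one must ensure the algebra satisfies enough to kill $T^{(4)}$ — so it should be a quotient of $\mathbb Z\langle X\rangle/T^{(4)}$, e.g.\ the universal enveloping-type construction of a free nilpotent-of-class-$3$ Lie ring, or its mod-$3$ reduction — while still separating $v$ from $0$. Getting a small enough concrete model, and then confirming $\varphi(v)\ne 0$ by an honest (if short) matrix computation, is where the real work lies; the symmetry bookkeeping that reduces $V_5/W_5$ to a cyclic group is routine given Lemmas \ref{32+32T4} and \ref{in}.
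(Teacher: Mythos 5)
Your reduction to the multilinear degree-$5$ component and the observation that, modulo $T^{(4)}$, the products $[a_{\sigma(1)},a_{\sigma(2)},a_{\sigma(3)}][a_{\sigma(4)},a_{\sigma(5)}]$ all collapse to $\pm v$ with $3v\equiv 0$ are sound and match the paper's setup (this is exactly the role of the ideal $I$ and of Lemmas \ref{32+32T4} and \ref{in}, leading to Proposition \ref{prop_main}). But the proposal stops exactly where the lemma begins: everything you have written shows only that the image of $v$ generates a cyclic group of order dividing $3$, and you yourself flag that proving the order is $3$ rather than $1$ --- equivalently, constructing the separating functional or test algebra --- ``is where the real work lies.'' That verification is not a routine afterthought; it is the entire content of Lemma \ref{notin}, and neither of your two suggested routes is carried out. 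The isotypic-decomposition route is only sketched (no spanning set for $V_5\cap T^{(4)}$ is written down, no index is computed), and the test-algebra route is essentially circular: an algebra in which every $[a_1,a_2,a_3,a_4]$ vanishes identically but $[a_1,a_2,a_3][a_4,a_5]\neq 0$ exists if and only if $v\notin T^{(4)}$, since $T^{(4)}$ is the $T$-ideal generated by $[x_1,x_2,x_3,x_4]$; moreover your hint (unipotent/strictly triangular matrices) does not obviously produce one, and by Lemma \ref{in} any such algebra must have $3$-torsion, so a ``short honest matrix computation'' over $\mathbb Z$ cannot work and even over $\mathbb Z/3$ no candidate is exhibited.

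What the missing step requires, and what the paper actually does, is an explicit control of the multilinear part of $T^{(4)}$: one must show that $\overline{R}_5\cap\overline{T}^{(4)}_5$ is generated by the elements (\ref{generators1}) together with the two families (\ref{generators2}) coming from Lemma \ref{32+32T4} (this uses the direct-sum analysis of $W_1$ and $W_2$ via the substitutions $\nu_i$, Lemmas \ref{W1W2} and \ref{W2basis}), and only then can one define the sign homomorphism $\mu$ with $\mu(h_{i_1\dots i_5})=\mathrm{sgn}(\sigma)$, check that it kills all relations except the Jacobi ones, which map to $\pm 3$, and conclude $\mu(P)=3\,\mathbb Z$ while $\mu(h_{12345})=1$ (Lemma \ref{hnotinP}). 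Your functional $\varphi$ is, in effect, this $\mu$, but without the generating-set analysis you have no way to verify that $\varphi$ vanishes on all of $V_5\cap T^{(4)}$ --- in particular on the contributions of the expanded generators $[(x_{i_1}x_{i_2}),x_{i_3},x_{i_4},x_{i_5}]$, etc., which is precisely the danger your plan leaves unaddressed. As it stands the proposal is a plausible strategy outline with the decisive step missing, so it does not constitute a proof of the lemma.
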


Recall that $X_5 = \{ x_1,x_2,x_3,x_4,x_5 \} \subset X $, $\mathbb
Z \langle X_5 \rangle \subset \mathbb Z \langle X \rangle$,
$T^{(4)}_5 = \mathbb Z \langle X_5 \rangle \cap T^{(4)}$. Note
that $v = [x_1,x_2,x_3] [x_4,x_5] \in \mathbb Z \langle X_5
\rangle$. Let $I$ be the ideal of $\mathbb Z \langle X_5 \rangle$
spanned by all monomials $x_{i_1} x_{i_2} \dots x_{i_k}$ $(1 \le
i_1, i_2, \dots , i_k \le 5)$ such that $i_r = i_s$ for some $r
\ne s$. In particular, if $k>5$ then $x_{i_1} x_{i_2} \dots
x_{i_k} \in I$. The following proposition will be proved in the
next sections.

\begin{proposition}\label{prop_main}
$v \notin T^{(4)}_5 +I$.
\end{proposition}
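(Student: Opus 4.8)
The plan is to work entirely inside the finite-rank ring $\mathbb Z\langle X_5\rangle$ and to exploit the fact that, modulo the ideal $I$, the only monomials that survive are the $5! = 120$ squarefree monomials $x_{\sigma(1)}x_{\sigma(2)}x_{\sigma(3)}x_{\sigma(4)}x_{\sigma(5)}$ in which each of $x_1,\dots,x_5$ occurs exactly once. So $\mathbb Z\langle X_5\rangle/I$ has, in multidegree $(1,1,1,1,1)$, a free abelian group of rank $120$ with the permutation monomials as basis; call this module $M$, a free $\mathbb Z$-module on which $S_5$ acts by permuting the $x_i$. First I would identify the image of $T^{(4)}_5$ in $M$: since $v$ is multilinear of degree $5$, it suffices to understand the multilinear degree-$5$ part of $T^{(4)}$, which is spanned by the multilinearizations of $a[a_1,a_2,a_3,a_4]b$. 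Concretely, the relevant generators of $(T^{(4)}_5+I)/I$ are, up to $S_5$-action, the images of $[x_{i_1},x_{i_2},x_{i_3},x_{i_4}]x_{i_5}$, $x_{i_5}[x_{i_1},x_{i_2},x_{i_3},x_{i_4}]$, and $[x_{i_1},x_{i_2},x_{i_3}][x_{i_4},x_{i_5}]$-type products only insofar as they already lie in $T^{(4)}$; the genuinely new generators are those of the first two shapes together with $[x_{i_1}x_{i_5},x_{i_2},x_{i_3},x_{i_4}]$ and similar. Thus the image $W$ of $T^{(4)}_5$ in $M$ is the $\mathbb Z$-span of the $S_5$-orbit of finitely many explicit vectors in $\mathbb Z^{120}$.

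Next I would set up a computable linear-algebra certificate. The claim $v\notin T^{(4)}_5+I$ is equivalent to: the image $\bar v$ of $v=[x_1,x_2,x_3][x_4,x_5]$ in $M$ does not lie in $W$. To prove a non-membership I would produce a linear functional $\varphi\colon M\to \mathbb Z/3\mathbb Z$ (or $\to A$ for a suitable finite abelian group $A$) that kills $W$ but not $\bar v$; equivalently, exhibit a quotient ring $R$ of $\mathbb Z\langle X_5\rangle$ in which all fourth commutators vanish, all ``square'' monomials vanish, and the image of $v$ is nonzero. Given Lemma~\ref{in} the natural target is $A=\mathbb Z/3\mathbb Z$, so concretely I want a $3$-valued assignment $c\colon S_5\to\mathbb Z/3\mathbb Z$, extended linearly, such that $c$ annihilates every $S_5$-translate of every listed generator of $W$ and $c(\bar v)\neq 0$. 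Because $v$ modulo $T^{(4)}$ transforms by the sign character on the relevant $S_5$ (by \eqref{permutation2}), the most natural candidate is the functional $c(\text{monomial }x_{\sigma(1)}\cdots x_{\sigma(5)}) = \operatorname{sgn}(\sigma)\,\epsilon(\sigma) \bmod 3$ for a suitable correction $\epsilon$, or more robustly one obtains $\varphi$ by reducing mod $3$ and then solving the (finite, explicit) system ``$\varphi$ vanishes on generators of $W$'' by Gaussian elimination over $\mathbb F_3$; the content of the proposition is precisely that this system has a solution with $\varphi(\bar v)\neq 0$, i.e.\ that $\bar v\notin W\otimes\mathbb F_3$.

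The main obstacle, and the reason this is split off as a proposition to be handled ``in the next sections,'' is organizing the finite computation so that it is both correct and human-checkable: one must list a generating set for $W$ as an $S_5$-module (carefully tracking which shapes of degree-$5$ elements of $T^{(4)}$ contribute, using the product rule $[ab,c]=a[b,c]+[a,c]b$ to reduce $[x_ix_j,x_k,x_l,x_m]$ to the two one-sided shapes plus lower-commutator products, then using Lemmas~\ref{32+32T4} and~\ref{in} to see which of the latter are already redundant), reduce this $S_5$-module to a manageable $\mathbb Z$-basis or at least a spanning set in $\mathbb Z^{120}$, and then verify the $\mathbb F_3$-rank gap that separates $W$ from $W+\mathbb Z\bar v$. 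A clean way to do the last step is representation-theoretic: decompose $M\otimes\mathbb F_3 = \mathbb F_3[S_5]$ (this is the $(1,1,1,1,1)$-graded piece, a regular representation of $S_5$, though over $\mathbb F_3$ it is not semisimple since $3\mid 5!$), show that $W\otimes\mathbb F_3$ is contained in the two-sided ideal of $\mathbb F_3[S_5]$ generated by the images of the listed generators, and show $\bar v$ is not in that ideal by evaluating against an explicit idempotent or by exhibiting a module on which the generators act as $0$ and $v$ does not; the mod-$3$ failure of semisimplicity is exactly what makes the $3$-torsion appear, matching the remark in the introduction that ``the reason behind the existence of $3$-torsion $\ldots$ is the Jacobi identity.'' I would therefore carry out, in order: (1) fix the basis of $M$ and the $S_5$-action; (2) write down the finite generating list for $W$; (3) reduce mod $3$ and locate $\bar v$ relative to $W\otimes\mathbb F_3$ via an explicit functional/quotient module; (4) conclude $v\notin T^{(4)}_5+I$, hence by Proposition~\ref{prop_main} that $v\notin T^{(4)}$, proving Lemma~\ref{notin} and Theorem~\ref{theorem1}.
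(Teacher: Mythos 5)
Your overall strategy --- restrict to the multilinear degree-$5$ component of $\mathbb Z\langle X_5\rangle/I$ (a free $\mathbb Z$-module of rank $120$), identify the image of $T^{(4)}_5$ there, and certify non-membership of $v$ by a functional into $\mathbb Z/3\mathbb Z$ --- is in the same general spirit as the paper, which also works in $\overline R_5=(R_5+I)/I$ and ultimately uses a sign-type homomorphism $\mu$ with $\mu(P)=3\,\mathbb Z$. But as written your proposal has a genuine gap: the decisive step is never carried out. You reduce the proposition to the assertion that a suitable $\mathbb F_3$-valued functional annihilating the image of $T^{(4)}_5$ but not $\bar v$ exists, and then remark that ``the content of the proposition is precisely that this system has a solution''; that is circular, since exhibiting (or otherwise proving the existence of) such a certificate is exactly what has to be done, and nothing in the outline does it. Worse, the one concrete candidate you name fails: the functional $c(x_{\sigma(1)}\cdots x_{\sigma(5)})=\operatorname{sgn}(\sigma)$ already vanishes on $v$ itself, because the signed sum of the coefficients of $[x_1,x_2,x_3]$ is $1+1-1-1=0$; so no ``sign of the monomial'' functional (with or without reduction mod $3$) separates $v$ from the image of $T^{(4)}_5$, and the needed correction $\epsilon$ is not a minor adjustment but the heart of the matter.

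The paper resolves exactly this difficulty by not working at the level of monomials at all. It first proves structural facts --- $W_1\cap W_2=0$ (Lemma \ref{W1W2}, via the substitutions $x_i\mapsto 1$ and the free Lie basis of Lemma \ref{basis_V_n}) and the basis $\mathcal B$ of $W_2$ (Lemma \ref{W2basis}) --- which show that $\overline R_5\cap\overline T^{(4)}_5\subseteq U$ and that the span of the products $[x_{i_1},x_{i_2},x_{i_3}][x_{i_4},x_{i_5}]$ modulo $U_1+U_2'$ has the expected presentation: the kernel of $\psi\colon H\to U/(U_1+U_2')$ is exactly the subgroup $Q$ generated by the two antisymmetries and the Jacobi relation. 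Only then is the sign functional introduced, on the free abelian group $H$ of formal symbols $h_{i_1i_2i_3i_4i_5}$ (not on monomials), where it does kill all the generators of $P$ modulo $3$ while taking the value $1$ on $h_{12345}$ (Lemma \ref{hnotinP}). Without some substitute for this identification of the relation module --- or an explicit, verified elimination certificate in $\mathbb F_3^{120}$, which you do not supply --- your outline does not establish $v\notin T^{(4)}_5+I$.
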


Lemma \ref{notin} is an immediate corollary of Proposition
\ref{prop_main} and Theorem \ref{theorem1} follows immediately
from Lemmas \ref{in} and \ref{notin}. This completes the proof of
Theorem \ref{theorem1} provided that Proposition \ref{prop_main}
is proved.

\section{Auxiliary results}

Let $P_n$ $(n \le 1)$ be the subgroup of the additive group of
$\mathbb Z \langle X \rangle$ generated by all monomials which are
of degree $1$ in each variable $x_1, \dots , x_n$ and do not
contain any other variable. Then $P_n$ is a free abelian group of
rank $n!$. Let $L(X)$ be the free Lie ring on the free generating
set $X$, $L(X) \subset \mathbb Z \langle X \rangle$. Define $V_n =
L(X) \cap P_n$. The following lemma is well known (see, for
instance, \cite[Exercise 4.3.8]{Drenskybook}).

\begin{lemma}\label{basis_V_n}
For each $n >1$, $V_n$ is a free abelian group with a basis
\begin{equation}\label{commutator}
\Big\{ [x_n, x_{i_1}, \dots , x_{i_{n-1}}] \mid \{ i_1, \dots ,
i_{n-1} \} = \{ 1, 2, \dots , n-1 \} \Big\} .
\end{equation}
\end{lemma}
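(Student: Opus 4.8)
The plan is to show that the set \eqref{commutator} is a $\mathbb Z$-linearly independent spanning set of $V_n$; once this is done it is automatically a $\mathbb Z$-basis and $V_n$ is free abelian, of rank $(n-1)!$. Observe first that $V_n = L(X) \cap P_n$ is exactly the multihomogeneous component of $L(X)$ of degree $1$ in each of $x_1, \dots , x_n$ and degree $0$ in every other variable, and that each commutator in \eqref{commutator} does lie in $V_n$.

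For the spanning part I would argue that an arbitrary $f \in V_n$ is a $\mathbb Z$-linear combination of Lie monomials, i.e.\ iterated brackets, in which each of $x_1, \dots , x_n$ occurs exactly once. Using anticommutativity $[a,b] = -[b,a]$ together with the Jacobi identity in the form $[a,[b,c]] = [[a,b],c] - [[a,c],b]$, each such iterated bracket is rewritten, by induction on the length of its right-hand factor, as a $\mathbb Z$-linear combination of left-normed commutators $[x_{\sigma(1)}, \dots , x_{\sigma(n)}]$ with $\sigma \in S_n$. Next I would establish, by induction on $m$, the auxiliary fact that for any Lie element $a$ and any generators $y_1, \dots , y_m$ the element $[a, [y_1, \dots , y_m]]$ is a $\mathbb Z$-linear combination of the left-normed commutators $[a, y_{\tau(1)}, \dots , y_{\tau(m)}]$ with $\tau \in S_m$ (the inductive step is one more application of the Jacobi identity). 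Applying this with $a = x_n$ lets one move the letter $x_n$ to the front of any left-normed commutator: writing $x_n = x_{\sigma(k)}$ and $u = [x_{\sigma(1)}, \dots , x_{\sigma(k-1)}]$, one has $[x_{\sigma(1)}, \dots , x_{\sigma(n)}] = [[u, x_n], x_{\sigma(k+1)}, \dots , x_{\sigma(n)}] = -[[x_n, u], x_{\sigma(k+1)}, \dots , x_{\sigma(n)}]$, and expanding $[x_n, u]$ by the auxiliary fact exhibits this as a $\mathbb Z$-combination of elements of \eqref{commutator}. Hence \eqref{commutator} spans $V_n$.

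For linear independence I would work inside $\mathbb Z \langle X \rangle$ and track a single associative monomial. By an easy induction on $n-1$, when $[x_n, x_{i_1}, \dots , x_{i_{n-1}}]$ is expanded as a $\mathbb Z$-combination of monomials, the only monomial beginning with the letter $x_n$ is $x_n x_{i_1} x_{i_2} \cdots x_{i_{n-1}}$, and it occurs there with coefficient $1$. Consequently, in any relation $\sum_{\sigma \in S_{n-1}} a_\sigma [x_n, x_{\sigma(1)}, \dots , x_{\sigma(n-1)}] = 0$, comparing coefficients of the monomial $x_n x_{\sigma(1)} x_{\sigma(2)} \cdots x_{\sigma(n-1)}$ forces $a_\sigma = 0$ for every $\sigma$. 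Thus \eqref{commutator} is $\mathbb Z$-linearly independent, hence a basis of $V_n$, and $V_n$ is free abelian of rank $(n-1)!$.

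I expect the only mildly delicate point to be the bookkeeping in the spanning argument, in particular the auxiliary identity used to push $x_n$ into the leading position, whereas the linear independence is a short self-contained computation in $\mathbb Z \langle X \rangle$. Everything else is the routine multilinear calculus of left-normed commutators.
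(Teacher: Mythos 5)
Your proposal is correct and follows essentially the same route as the paper: spanning is obtained from the Jacobi identity (which the paper only asserts and you spell out), and linear independence comes from the distinguished monomial $x_n x_{i_1} \cdots x_{i_{n-1}}$, which you characterize as the unique monomial beginning with $x_n$ (coefficient $1$) and the paper as the lexicographic leading term of $[x_n, x_{i_1}, \dots , x_{i_{n-1}}]$ --- the same observation. No gaps; your bookkeeping for pushing $x_n$ to the leading position is the standard argument and works as stated.
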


\begin{proof}
It can be easily proved using the Jacobi identity that the
commutators of (\ref{commutator}) generate $V_n$ as a subgroup of
the additive group of $\mathbb Z \langle X \rangle$. On the other
hand, the leading term (in the lexicographic order) of the
commutator $[x_n, x_{i_1}, \dots , x_{i_{n-1}}]$ is the monomial
$x_n x_{i_1} \dots x_{i_{n-1}}$. Hence, distinct commutators of
(\ref{commutator}) have distinct leading terms. It follows that
the elements of (\ref{commutator}) are linearly independent so
they form a basis of $V_n$.
\end{proof}

Let $W_1$ be the subgroup of the additive group of $\mathbb Z
\langle X_5 \rangle$ generated by all elements $x_{i_1} [x_{i_2},
x_{i_3}, x_{i_4}, x_{i_5}]$ and let $W_2$ the subgroup generated
by the elements $[x_{i_1}, x_{i_2}, x_{i_3}, x_{i_4}, x_{i_5}]$
and $[x_{i_1}, x_{i_2}, x_{i_3}] [x_{i_4}, x_{i_5}]$ where $\{
i_1, i_2, i_3, i_4 , i_5 \} = \{ 1,2,3,4,5 \}$. Note that
\[
[x_{i_1}, x_{i_2}] [x_{i_3}, x_{i_4}, x_{i_5}]  = [x_{i_3},
x_{i_4}, x_{i_5}][x_{i_1}, x_{i_2}] + [[x_{i_1}, x_{i_2}],
[x_{i_3}, x_{i_4}, x_{i_5}]]
\]
so $ [x_{i_1}, x_{i_2}] [x_{i_3}, x_{i_4}, x_{i_5}]  \in W_2 $.

\begin{lemma}\label{W1W2}
$W_1 \cap W_2 = 0$.
\end{lemma}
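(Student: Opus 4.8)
The plan is to exhibit a homomorphism (or a family of linear functionals) from the additive group of $\mathbb Z \langle X_5 \rangle$ that vanishes on $W_1$ but is injective — or at least nonzero on every nonzero element — on $W_2$; equivalently, to find enough of the monomial content of $\mathbb Z \langle X_5 \rangle$ that is ``seen'' by $W_2$ but not by $W_1$. Concretely, I would work inside the free abelian group $P_5$ of multilinear monomials in $x_1,\dots,x_5$, which has rank $5!=120$, and analyse the leading monomials (in the lexicographic order, as in the proof of Lemma \ref{basis_V_n}) of the generators of $W_1$ and of $W_2$. The generators of $W_2$ are of two kinds: the left-normed commutators $[x_{i_1},\dots,x_{i_5}]$, which by Lemma \ref{basis_V_n} have pairwise distinct leading monomials $x_{i_1}\dots x_{i_5}$ with $i_1$ arbitrary; and the products $[x_{i_1},x_{i_2},x_{i_3}][x_{i_4},x_{i_5}]$, whose leading monomial is $x_{i_1}x_{i_2}x_{i_3}x_{i_4}x_{i_5}$ (the leading term of $[x_{i_1},x_{i_2},x_{i_3}]$ is $x_{i_1}x_{i_2}x_{i_3}$ and that of $[x_{i_4},x_{i_5}]$ is $x_{i_4}x_{i_5}$). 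The generators of $W_1$ are $x_{i_1}[x_{i_2},x_{i_3},x_{i_4},x_{i_5}]$, with leading monomial $x_{i_1}x_{i_2}x_{i_3}x_{i_4}x_{i_5}$ as well, so leading monomials alone will not separate the two subgroups; I will need a finer invariant.

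The cleaner route, which I would actually carry out, is to pass to the quotient ring and use the structure already set up in the paper. Observe that $W_1 \subseteq \mathbb Z\langle X_5\rangle \cdot T^{(4)}$ modulo $I$ is not quite what one wants; instead note that every generator of $W_1$ lies in $T^{(4)}$ (it is $x_{i_1}$ times a 4-fold commutator), while $W_2$ is spanned by 5-fold left-normed commutators (again in $T^{(4)}$, indeed in $\gamma_5$) together with the products $[x_{i_1},x_{i_2},x_{i_3}][x_{i_4},x_{i_5}]$. So both $W_1$ and $W_2$ sit inside $T^{(4)}_5 + \gamma_5$-type subspaces, and the point is really to separate the ``commutator times commutator'' part. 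I would therefore define a $\mathbb Z$-linear map $\varphi$ on the multilinear component $P_5$ by specifying its values on the $120$ basis monomials, designed so that $\varphi$ kills every $x_{i_1}[x_{i_2},x_{i_3},x_{i_4},x_{i_5}]$ but is injective on $W_2$. A natural candidate: use the projection of $\mathbb Z\langle X_5\rangle$ onto its multilinear part followed by the map that records, for a product of commutators, only the ``shape'' data — but to make $\varphi$ vanish on $W_1$ I would exploit that $x_{i_1}[x_{i_2},x_{i_3},x_{i_4},x_{i_5}]$ begins with a single letter followed by a genuine Lie element of degree $4$, whereas the generators of $W_2$ either are genuine Lie elements of degree $5$ or are visibly a product of a degree-$3$ and a degree-$2$ Lie element. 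The relevant algebraic fact is that in the multilinear setting $P_5 \cong \mathbb Z S_5$ as a bimodule, and the subspaces corresponding to $W_1$, to the degree-$5$ Lie elements, and to $[\,\cdot\,,\cdot\,,\cdot][\,\cdot\,,\cdot]$ correspond to explicit sums of Young-symmetrizer images; I would identify these and check directly that the $W_1$-piece meets the $W_2$-piece trivially by a rank count in $\mathbb Z S_5$ (equivalently, over $\mathbb Q$ and then checking no torsion is introduced).

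The main obstacle I anticipate is the torsion bookkeeping: it is easy to show $W_1 \cap W_2 = 0$ after tensoring with $\mathbb Q$ (a representation-theoretic dimension count in $\mathbb Q S_5$, using the $\mathfrak{sl}$- or $S_n$-module decomposition of the multilinear component, together with the Dynkin/Specht–Wever characterisation of Lie elements), but the lemma asserts it integrally, and the whole paper is about $3$-torsion lurking in exactly these degrees. So the delicate step is to rule out the possibility that some $\mathbb Z$-combination $u_1 = u_2$ with $u_1 \in W_1$, $u_2 \in W_2$ becomes nonzero only after clearing a denominator of $3$. I would handle this by working over $\mathbb Z$ from the start: choose an explicit $\mathbb Z$-basis of $W_1$ (the elements $x_{i_1}[x_{i_2},x_{i_3},x_{i_4},x_{i_5}]$ are $\mathbb Z$-independent because, e.g., their images under the obvious ``first letter'' grading are independent) and an explicit spanning set of $W_2$, write the inclusion $W_1 + W_2 \hookrightarrow P_5$ as an integer matrix, and show the relevant minors are $\pm 1$ (or coprime to everything in sight), so that no new torsion arises in the intersection. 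In practice I expect it is cleanest to define one well-chosen linear functional — or a short list of them — on $P_5$ with integer values, prove by a direct (if tedious) computation that it annihilates all generators of $W_1$, and prove that it is injective on a $\mathbb Z$-basis of $W_2$; this reduces the lemma to a finite check that I would not reproduce in full here.
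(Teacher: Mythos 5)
Your text is a plan, not a proof: the lemma ultimately rests on a separating device (``a well-chosen linear functional --- or a short list of them --- \dots annihilates all generators of $W_1$ \dots injective on a $\mathbb Z$-basis of $W_2$''), and you never construct it, nor do you carry out the alternative Young-symmetrizer/rank computation in $\mathbb Z S_5$; the decisive step is explicitly deferred as ``a finite check that I would not reproduce in full here.'' That is exactly the content of the lemma, so as it stands there is a genuine gap. For comparison, the paper's argument does produce such a device, and in the opposite direction to the one you propose: for each $i$ it uses the substitution endomorphism $\nu_i$ with $\nu_i(x_i)=1$, $\nu_i(x_j)=x_j$ for $j\ne i$. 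Every generator of $W_2$ is a product of commutators involving all five variables inside commutators, so $\nu_i(W_2)=0$; likewise $\nu_i(W_1^{(j)})=0$ for $j\ne i$, where $W_1=W_1^{(1)}\oplus\dots\oplus W_1^{(5)}$ is graded by the first letter (the grading you yourself mention). The only nontrivial input is that $\nu_i$ is injective on $W_1^{(i)}$, which follows from Lemma \ref{basis_V_n}: the basis $\{x_i[x_5,x_{i_2},x_{i_3},x_{i_4}]\}$ of $W_1^{(i)}$ maps to the linearly independent set $\{[x_5,x_{i_2},x_{i_3},x_{i_4}]\}$. Then any $f\in W_1\cap W_2$ has all its components $f_i\in W_1^{(i)}\cap\operatorname{Ker}\nu_i=0$. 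If you want to salvage your approach, this is the kind of concrete map you would need to write down.

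A second, smaller point: your ``delicate step'' about $3$-torsion is a red herring for this particular statement. Both $W_1$ and $W_2$ sit inside the multilinear component $P_5$, which is a free abelian group of rank $120$; hence a nonzero $u\in W_1\cap W_2$ would give a nonzero element of $\operatorname{span}_{\mathbb Q}(W_1)\cap\operatorname{span}_{\mathbb Q}(W_2)$ inside $P_5\otimes\mathbb Q$. So proving the rational spans intersect trivially already proves the integral lemma, and no unimodular-minor bookkeeping is needed. (The genuine torsion phenomena of the paper appear later, in the quotient $\overline{R}_5\cap\overline{T}^{(4)}_5$ analysis of Section 4, not here.) Had you noticed this, your representation-theoretic dimension count over $\mathbb Q$ would have been a legitimate, genuinely different route --- but it, too, would still have to be carried out, e.g.\ by checking that the $S_5$-submodules generated by $x_1[x_2,x_3,x_4,x_5]$ and by $W_2$ have ranks summing to the rank of their sum.
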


\begin{proof}
Let $W_1^{(j)}$ $(1 \le j \le 5)$ be the subgroup of $W_1$
generated by the elements  $x_{j} [x_{i_2}, x_{i_3}, x_{i_4},
x_{i_5}]$ where $ \{ j, i_2, i_3, i_4, i_5 \} = \{1, 2, 3, 4, 5
\}$. It is clear that $W_1 = W_1^{(1)} \oplus \dots \oplus
W_1^{(5)}$.

Let $\nu_i$ be the endomorphism of $\mathbb Z \langle X \rangle$
defined by $\nu_i (x_i) = 1$, $\nu_i (x_j) = x_j$ for all $j \ne
i$. It is clear that $\nu_i (W_2) = 0$  and $\nu_i (W_1^{(j)}) =
0$ for all $j \ne i$. On the other hand, $W_1^{(i)} \cap \mbox{Ker
} \nu_i = 0$. Indeed, suppose in order to simplify notation that
$i=1$. Then it follows easily from Lemma \ref{basis_V_n} that the
set $ C_1 = \Big\{ x_1 [x_5, x_{i_2}, x_{i_3}, x_{i_4}] \mid \{
i_2, i_3, i_4 \} = \{ 2,3,4 \} \Big\}$ is a basis of the free
abelian group $W_1^{(1)}$. On the other hand, by the same lemma,
the set $ \nu_1 (C_1) = \Big\{ [x_5, x_{i_2}, x_{i_3}, x_{i_4}]
\mid \{ i_2, i_3, i_4 \} = \{ 2,3,4 \} \Big\}$ is linearly
independent. It follows that $W_1^{(1)} \cap \mbox{Ker } \nu_1 =
0$, as claimed. If $i>1$ then the proof is similar.

Now we are in a position to complete the proof of Lemma
\ref{W1W2}. Suppose that $f \in W_1 \cap W_2$. Since $f \in W_2$,
we have $f \in \mbox{Ker } \nu_i$ for all $i$. On the other hand,
$f \in W_1$ so $f = f_1 + \dots + f_5$ where $f_i \in W_1^{(i)}$.
For each $i$ we have $f_i \in \mbox{Ker } \nu_i$ because $f \in
\mbox{Ker } \nu_i$ and $f_j \in \mbox{Ker } \nu_i$ for all $j \ne
i$. Since $W_1^{(i)} \cap \mbox{Ker } \nu_i = 0$, we have $f_i =
0$. Thus, $f = 0$ so $W_1 \cap W_2 = 0$. The proof of Lemma
\ref{W1W2} is completed.
\end{proof}

Let
\begin{gather*}
{\mathcal B}_1 = \Big\{ [x_5, x_{i_1}, x_{i_2}, x_{i_3}, x_{i_4}]
\mid \{ i_1, i_2, i_3, i_4  \} = \{ 1,2,3,4 \} \Big\} ,
\\
{\mathcal B}_2 = \Big\{ [x_{i_1}, x_{i_2}, x_{i_3}] [x_{5},
x_{i_4}] \mid \{ i_1, i_2, i_3, i_4  \} = \{ 1,2,3,4 \} , i_1 >
i_2, i_3 \Big\} ,
\\
{\mathcal B}_3 = \Big\{ [x_{i_1}, x_{i_2}] [x_{5}, x_{i_3},
x_{i_4}] \mid \{ i_1, i_2, i_3, i_4  \} = \{ 1,2,3,4 \} , i_1 >
i_2 \Big\}
\end{gather*}
and let ${\mathcal B} = {\mathcal B}_1 \cup {\mathcal B}_2 \cup
{\mathcal B}_3$.

\begin{lemma}\label{W2basis}
$W_2$ is a free abelian group with a basis ${\mathcal B}$.
\end{lemma}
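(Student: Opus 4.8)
The plan is to establish the two things the statement asserts: that $\mathcal{B}\subseteq W_2$ and generates $W_2$, and that $\mathcal{B}$ is linearly independent over $\mathbb{Z}$. Observe first that $W_2$ lies inside the free abelian group $P_5$ of polynomials of degree $1$ in each of $x_1,\dots,x_5$, so $W_2$ is automatically free abelian and the whole point is that this particular set $\mathcal{B}$ is a basis.

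\textbf{Generation.} That $\mathcal{B}\subseteq W_2$ is immediate: $\mathcal{B}_1$ and $\mathcal{B}_2$ consist of generators of $W_2$, while each element of $\mathcal{B}_3$ has the form $[x_{i_1},x_{i_2}][x_{i_3},x_{i_4},x_{i_5}]$, which belongs to $W_2$ by the computation displayed just before Lemma~\ref{W1W2}. For the reverse inclusion I would reduce each of the three types of generator of $W_2$ to a $\mathbb{Z}$-combination of $\mathcal{B}$. A commutator $[x_{i_1},\dots,x_{i_5}]$ lies in $V_5=L(X)\cap P_5$, so Lemma~\ref{basis_V_n} writes it in terms of $\mathcal{B}_1$. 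For a product $[x_{i_1},x_{i_2},x_{i_3}][x_{i_4},x_{i_5}]$ I would split according to whether the letter $x_5$ sits in the length-two or the length-three bracket. If it is in the length-two bracket, the product is, up to sign, $[x_a,x_b,x_c][x_5,x_d]$ with $\{a,b,c,d\}=\{1,2,3,4\}$; applying the Jacobi identity to rewrite $[x_a,x_b,x_c]$ as a $\mathbb{Z}$-combination of $[x_m,x_p,x_q]$ with $m=\max\{a,b,c\}$ turns it into a combination of $\mathcal{B}_2$. If $x_5$ is in the length-three bracket, Jacobi rewrites that bracket as a combination of commutators $[x_5,x_p,x_q]$, and then the identity $[a,b,c][d,e]=[d,e][a,b,c]+[[a,b,c],[d,e]]$ finishes the job: the last summand is a bracket of two Lie elements, hence again an element of $V_5$ covered by Lemma~\ref{basis_V_n}, while $[x_{i_4},x_{i_5}][x_5,x_p,x_q]$, after reordering the length-two bracket into decreasing order of index, is an element of $\mathcal{B}_3$. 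The remaining generators $[x_{i_1},x_{i_2}][x_{i_3},x_{i_4},x_{i_5}]$ are handled in the same spirit, again using $[x_5,x_a][x_b,x_c,x_d]=[x_b,x_c,x_d][x_5,x_a]+[[x_5,x_a],[x_b,x_c,x_d]]$ to move the short factor past the long one modulo $V_5$.

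\textbf{Linear independence.} For this I would use leading monomials. Order the monomials of $\mathbb{Z}\langle X\rangle$ by degree and then lexicographically for $x_1<x_2<\cdots<x_5$; this is a monomial order, so $\mathrm{lead}(fg)=\mathrm{lead}(f)\,\mathrm{lead}(g)$. One checks that the leading monomial of a left-normed commutator $[x_{j_1},\dots,x_{j_k}]$ whose head has the largest index among $j_1,\dots,j_k$ is $x_{j_1}x_{j_2}\cdots x_{j_k}$ (with coefficient $1$). Consequently the leading monomial of an element of $\mathcal{B}_1$ begins with $x_5$; that of an element of $\mathcal{B}_2$ is $x_{i_1}x_{i_2}x_{i_3}x_5x_{i_4}$, with $x_5$ in the fourth slot and a smaller letter first; and that of an element of $\mathcal{B}_3$ is $x_{i_1}x_{i_2}x_5x_{i_3}x_{i_4}$, with $x_5$ in the third slot. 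Within each family the leading monomials are pairwise distinct (the indices can be read off from their positions), and the three families have disjoint sets of leading monomials. Hence the $24+8+12=44$ elements of $\mathcal{B}$ have distinct leading monomials and are therefore $\mathbb{Z}$-linearly independent. Combined with the previous step this shows $\mathcal{B}$ is a basis of $W_2$.

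\textbf{Main obstacle.} The genuine work is the bookkeeping in the generation step: one must verify that every permuted product indeed reduces and that the Jacobi rewrites land inside the prescribed normal forms, respecting the index constraints $i_1>i_2,i_3$ in $\mathcal{B}_2$ and $i_1>i_2$ in $\mathcal{B}_3$. The observation that keeps this manageable is that the bracket of two Lie elements is itself a multilinear Lie element, hence lies in $V_5$; together with Lemma~\ref{basis_V_n} this lets one freely commute the length-two and length-three factors of a product modulo the span of $\mathcal{B}_1$.
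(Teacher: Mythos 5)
Your proposal is correct and takes essentially the same route as the paper: the paper likewise notes that $\mathcal B \subset W_2$ and generates $W_2$ (leaving as ``easily seen'' the Jacobi-identity and commutator-swapping bookkeeping you spell out), and it proves linear independence exactly as you do, by observing that distinct elements of $\mathcal B$ have distinct leading monomials in the lexicographic order.
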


\begin{proof}
It can be easily seen that ${\mathcal B} \subset W_2$ and
${\mathcal B}$ generates $W_2$ as a subgroup of the additive group
of $\mathbb Z \langle X_5 \rangle$. On the other hand, the leading
monomials (in the lexicographic order) of distinct elements of
${\mathcal B}$ are distinct so the elements of ${\mathcal B}$ are
linearly independent. The result follows.
\end{proof}

Let $\phi : \mathbb Z \langle X_5 \rangle \rightarrow \mathbb Z
\langle X_5 \rangle /I$ be the natural epimorphism, $\phi (f) = f
+ I$ for all $f \in \mathbb Z \langle X_5 \rangle$. Let $W = W_1 +
W_2$ and let $U = \phi (W)$, $U_i = \phi (W_i)$ $(i = 1,2)$. Then
$\mbox{Ker } \phi \cap W = 0$ so $\left. \phi \right|_W : W
\rightarrow U$ is an isomorphism. Hence, Lemmas  \ref{W1W2} and
\ref{W2basis} imply the following assertions.

\begin{corollary}\label{U1U2}
$U_1 \cap U_2 = 0$.
\end{corollary}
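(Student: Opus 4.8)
The plan is to obtain Corollary \ref{U1U2} by transporting Lemma \ref{W1W2} along the map $\left. \phi \right|_W$. The one thing to check before this works is that $\left. \phi \right|_W$ is injective, i.e.\ that $\mbox{Ker } \phi \cap W = 0$. Recall that $\mbox{Ker } \phi = I$ is spanned by those basis monomials $x_{i_1} \dots x_{i_k}$ of $\mathbb Z \langle X_5 \rangle$ in which some index is repeated, whereas every element of $W = W_1 + W_2$ is a $\mathbb Z$-linear combination of monomials that are of degree $1$ in each of $x_1, \dots , x_5$. Since the monomials with a repeated index and the monomials that are multilinear in $x_1, \dots , x_5$ are disjoint subsets of the standard free $\mathbb Z$-basis of $\mathbb Z \langle X_5 \rangle$, their $\mathbb Z$-spans meet only in $0$; hence $W \cap I = 0$ and $\left. \phi \right|_W : W \to U$ is an isomorphism of abelian groups.

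Once injectivity of $\left. \phi \right|_W$ is in hand, the rest is formal: since $W_1$ and $W_2$ are subgroups of $W$ and $\left. \phi \right|_W$ is injective, taking images commutes with intersection, so $\phi(W_1) \cap \phi(W_2) = \phi(W_1 \cap W_2)$. By Lemma \ref{W1W2} we have $W_1 \cap W_2 = 0$, hence
\[
U_1 \cap U_2 = \phi(W_1) \cap \phi(W_2) = \phi(W_1 \cap W_2) = \phi(0) = 0 ,
\]
which is the assertion of the corollary.

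I do not expect any real obstacle here: the statement is essentially Lemma \ref{W1W2} read off inside the quotient $\mathbb Z \langle X_5 \rangle / I$. The only point deserving a moment's care is the verification that $W$ meets $\mbox{Ker } \phi$ trivially, and even that is immediate from the multihomogeneous grading of $\mathbb Z \langle X_5 \rangle$ by degrees in $x_1, \dots , x_5$: the subgroup $W$ lies entirely in the graded component of multidegree $(1,1,1,1,1)$, while $I$ is a graded submodule whose multidegree-$(1,1,1,1,1)$ component is $0$.
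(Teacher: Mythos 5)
Your argument is correct and is essentially the paper's own: the paper likewise observes that $\mbox{Ker }\phi \cap W = 0$, so that $\left. \phi \right|_W$ is an isomorphism, and then reads off Corollary \ref{U1U2} from Lemma \ref{W1W2}. You merely spell out the (easy) multidegree reason why $W \cap I = 0$, which the paper leaves implicit.
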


\begin{corollary}\label{U2basis}
$U_2$ is a free abelian group with a basis $\{ b + I \mid b \in
{\mathcal B} \}$.
\end{corollary}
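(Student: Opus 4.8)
The plan is to deduce the corollary directly from Lemma~\ref{W2basis} by transporting the basis of $W_2$ across the isomorphism $\left.\phi\right|_W : W \rightarrow U$ established just above the statement. First I would observe that, since $W_2 \subseteq W$, the restriction $\left.\phi\right|_{W_2}$ maps $W_2$ \emph{onto} $U_2 = \phi(W_2)$ by definition of $U_2$; and since $\mbox{Ker } \phi \cap W = 0$, in particular $\mbox{Ker } \phi \cap W_2 = 0$, so $\left.\phi\right|_{W_2}$ is injective as well. Hence $\left.\phi\right|_{W_2} : W_2 \rightarrow U_2$ is an isomorphism of abelian groups.

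Next, Lemma~\ref{W2basis} asserts that $W_2$ is a free abelian group with basis $\mathcal{B}$. An isomorphism of abelian groups carries a free basis to a free basis, so $\left.\phi\right|_{W_2}(\mathcal{B}) = \{\, b + I \mid b \in \mathcal{B} \,\}$ is a basis of $U_2$; in particular $U_2$ is free abelian, of rank $|\mathcal{B}|$. This completes the argument. (The companion assertion $U_1 \cap U_2 = 0$ of Corollary~\ref{U1U2} follows in exactly the same spirit: apply the isomorphism $\left.\phi\right|_W$ to the conclusion $W_1 \cap W_2 = 0$ of Lemma~\ref{W1W2}.)

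I do not anticipate any genuine obstacle here. The only point requiring (entirely routine) care is that the vanishing of $\mbox{Ker }\phi$ on all of $W$ immediately yields its vanishing on the subgroup $W_2$, so that $\phi$ restricts to an isomorphism on $W_2$; everything else is a formal consequence of that isomorphism together with Lemma~\ref{W2basis}. All of the substantive work has already been done in Lemmas~\ref{W1W2} and \ref{W2basis}, and in the verification that $\mbox{Ker }\phi \cap W = 0$.
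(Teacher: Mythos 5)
Your proposal is correct and is essentially the paper's own argument: the paper notes that $\mbox{Ker }\phi \cap W = 0$ makes $\left.\phi\right|_W$ an isomorphism onto $U$ and then simply transports the conclusions of Lemmas \ref{W1W2} and \ref{W2basis}, which is exactly your restriction-to-$W_2$ argument. No gaps.
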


\section{Proof of Proposition 2.5}

It is clear that the additive group of the ring $\mathbb Z \langle
X_5 \rangle $ is a direct sum of the subgroups $R_i$ $( i \ge 0 )$
generated by the monomials of degree $i$,
\[
\mathbb Z \langle X_5 \rangle = \bigoplus_{i \ge 0} R_i .
\]
It is also clear that $T^{(4)}_5$ is generated as a subgroup of
the additive group of $\mathbb Z \langle X_5 \rangle $ by the
polynomials $a_1[a_2, a_3, a_4, a_5] a_6$ where $a_i$ $(1 \le i
\le 6)$ are monomials. It follows that $T^{(4)}_5 \cap R_i$ is
generated as an additive group by the polynomials above such that
$\sum_{j=1}^6 \mbox{deg} (a_j) = i$.

Let $\overline{R}_i = (R_i + I) /I$, $\overline{T}^{(4)}_5 =
(T^{(4)}_5 + I) /I$. It is clear that $\mathbb Z \langle X_5
\rangle /I = \overline{R}_0 \oplus \overline{R}_1 \oplus \ldots
\oplus \overline{R}_5$, $\overline{T}^{(4)}_5 = (\overline{R}_4
\cap \overline{T}^{(4)}_5) \oplus (\overline{R}_5 \cap
\overline{T}^{(4)}_5)$ and $\overline{R}_5 \cap
\overline{T}^{(4)}_5$ is generated as a subgroup of the additive
group of $\mathbb Z \langle X_5 \rangle /I$ by the elements
$a_1[a_2, a_3, a_4, a_5] a_6 + I$ where $a_i$ $(1 \le i \le 6)$
are monomials and $\sum_{j=1}^6 \mbox{deg} (a_j) = 5$. It follows
that $\overline{R}_5 \cap \overline{T}^{(4)}_5$ is generated as an
additive group by
\begin{equation}\label{generators}
\begin{split}
x_{i_1} [x_{i_2}, x_{i_3}, x_{i_4}, x_{i_5}] + I, \  & [x_{i_1},
x_{i_2}, x_{i_3}, x_{i_4}] x_{i_5} + I, \ [(x_{i_1} x_{i_2}),
x_{i_3}, x_{i_4}, x_{i_5}] + I, \\
[x_{i_1}, (x_{i_2}x_{i_3}), x_{i_4}, x_{i_5}] + I, \ &[x_{i_1},
x_{i_2}, (x_{i_3}x_{i_4}), x_{i_5}] + I, \
[x_{i_1},x_{i_2},x_{i_3},(x_{i_4}x_{i_5})] + I
\end{split}
\end{equation}
where $\{ i_1, i_2, i_3, i_4 , i_5 \} = \{ 1,2,3,4,5 \}$.

We claim that $\overline{R}_5 \cap \overline{T}^{(4)}_5$ is
generated as an additive group by the elements
\begin{equation}\label{generators1}
x_{i_1} [x_{i_2}, x_{i_3}, x_{i_4}, x_{i_5}] + I, \  [x_{i_1},
x_{i_2}, x_{i_3}, x_{i_4}, x_{i_5}] + I
\end{equation}
and
\begin{equation}\label{generators2}
\begin{split}
& [x_{i_1}, x_{i_2}, x_{i_3}] [x_{i_4}, x_{i_5}] + [x_{i_1},
x_{i_2}, x_{i_4}] [x_{i_3}, x_{i_5}] + I, \\
& [x_{i_1}, x_{i_2}, x_{i_3}] [x_{i_4}, x_{i_5}] + [x_{i_1},
x_{i_4}, x_{i_3}] [x_{i_2}, x_{i_5}] + I.
\end{split}
\end{equation}
Indeed, it is straightforward to check repeating the calculations
of the proof of Lemma \ref{32+32T4} that all elements
(\ref{generators}) belong to the additive group generated by
(\ref{generators1}) and (\ref{generators2}). On the other hand, it
is clear that all elements (\ref{generators1}) belong to
$\overline{R}_5 \cap \overline{T}^{(4)}_5$. By Lemma
\ref{32+32T4}, all elements (\ref{generators2}) belong to
$\overline{R}_5 \cap \overline{T}^{(4)}_5$ as well. Therefore, the
elements (\ref{generators1}) and (\ref{generators2}) generate
$\overline{R}_5 \cap \overline{T}^{(4)}_5$, as claimed.

Recall that $U_i = \phi (W_i) = (W_i + I)/I$ $(i=1,2)$, $U = U_1 +
U_2$. Since $\overline{R}_5 \cap \overline{T}^{(4)}_5$ is
generated by the elements (\ref{generators1}) and
(\ref{generators2}), we have $\overline{R}_5 \cap
\overline{T}^{(4)}_5 \subseteq U$.

Let $U_2'$ be the subgroup of $U_2$ generated by all elements
$[x_{i_1}, x_{i_2}, x_{i_3}, x_{i_4}, x_{i_5}] + I$. It follows
easily from Lemma \ref{basis_V_n} that $U_2'$ is a free abelian
group and the set $\{ b + I \mid b \in {\mathcal B}_1 \}$ is a
basis of $U_2'$. Then, by Corollary \ref{U2basis}, $U_2 / U_2'$ is
a free abelian group with a basis formed by the images of the
elements of ${\mathcal B}_2$ and ${\mathcal B}_3$.

Note that $U/(U_1 + U_2') = (U_1 + U_2)/(U_1 + U_2') \simeq
U_2/(U_1 + U_2') \cap U_2$. By Corollary \ref{U1U2}, $(U_1 + U_2')
\cap U_2 = U_2'$ so $U/(U_1 + U_2') \simeq U_2/U_2'$. Hence,
$U/(U_1 + U_2')$ is a free abelian group with a basis formed by
$\overline{{\mathcal B}}_2 \cup \overline{{\mathcal B}}_3$ where
$\overline{{\mathcal B}}_j = \{ b + (U_1 + U_2') \mid b \in
{\mathcal B}_j \}$ $(j = 2,3)$.

Note also that $U_1, U_2' \subset \overline{R}_5 \cap
\overline{T}^{(4)}_5$. Since $U_1 + U_2'$ is generated by the
elements (\ref{generators1}) and $\overline{R}_5 \cap
\overline{T}^{(4)}_5$ is generated by the elements
(\ref{generators1}) and (\ref{generators2}), the quotient group
$(\overline{R}_5 \cap \overline{T}^{(4)}_5)/(U_1+U_2')$ is
generated by the images of elements (\ref{generators2}).

Let $H$ be the free (additive) abelian group freely generated by
the set
\[
\Big\{ h_{i_1 i_2 i_3 i_4 i_5} \mid \{ i_1, i_2, i_3, i_4, i_5 \}
= \{ 1,2,3,4,5 \} \Big\} .
\]
Define a homomorphism $\psi : H \rightarrow U/(U_1+U_2')$ by
\[
\psi (h_{i_1 i_2 i_3 i_4 i_5}) = [x_{i_1}, x_{i_2}, x_{i_3}]
[x_{i_4}, x_{i_5}] + (U_1 + U_2').
\]
Let $Q$ be the subgroup of $H$ generated by the elements
\begin{equation*}
\begin{split}
h_{i_1 i_2 i_3 i_4 i_5} + h_{i_2 i_1 i_3 i_4 i_5}, \ h_{i_1 i_2
i_3 i_4 i_5} + h_{i_1 i_2 i_3 i_5 i_4}, \ h_{i_1 i_2 i_3 i_4 i_5}
+ h_{i_2 i_3 i_1 i_4 i_5} + h_{i_3 i_1 i_2 i_4 i_5}.
\end{split}
\end{equation*}

\begin{lemma}
$\mbox{\rm Ker } \psi = Q$.
\end{lemma}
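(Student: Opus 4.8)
I would prove the two inclusions $Q \subseteq \mbox{Ker } \psi$ and $\mbox{Ker } \psi \subseteq Q$ separately. The first is a direct check; for the second I would avoid reducing a general element of $\mbox{Ker } \psi$ to a normal form and instead use a counting argument based on the Hopfian property of finitely generated abelian groups.

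For $Q \subseteq \mbox{Ker } \psi$: applying $\psi$ to the three families of generators of $Q$ produces, modulo $U_1 + U_2'$, the elements $([x_{i_1},x_{i_2},x_{i_3}] + [x_{i_2},x_{i_1},x_{i_3}])[x_{i_4},x_{i_5}]$, $[x_{i_1},x_{i_2},x_{i_3}]([x_{i_4},x_{i_5}] + [x_{i_5},x_{i_4}])$, and $([x_{i_1},x_{i_2},x_{i_3}] + [x_{i_2},x_{i_3},x_{i_1}] + [x_{i_3},x_{i_1},x_{i_2}])[x_{i_4},x_{i_5}]$; the first two vanish because $[a,b] = -[b,a]$ and the third vanishes by the Jacobi identity. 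Hence $Q \subseteq \mbox{Ker } \psi$.

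Next I would establish two facts. First, $\psi$ is surjective: as noted in the discussion preceding the lemma, $U/(U_1 + U_2')$ is free abelian with basis the images of ${\mathcal B}_2 \cup {\mathcal B}_3$, which has $8 + 12 = 20$ elements. Each $b = [x_{i_1},x_{i_2},x_{i_3}][x_5,x_{i_4}] \in {\mathcal B}_2$ is $\psi(h_{i_1 i_2 i_3 5 i_4})$; and each $b = [x_{i_1},x_{i_2}][x_5,x_{i_3},x_{i_4}] \in {\mathcal B}_3$ equals $[x_5,x_{i_3},x_{i_4}][x_{i_1},x_{i_2}] + [[x_{i_1},x_{i_2}],[x_5,x_{i_3},x_{i_4}]]$, where the second summand is a multilinear Lie element of degree $5$, hence lies in $V_5 \subseteq W_2$ and has image in $U_2'$ by Lemma \ref{basis_V_n}; thus $b + (U_1+U_2') = \psi(h_{5 i_3 i_4 i_1 i_2})$, and all of $U/(U_1+U_2')$ lies in the image of $\psi$. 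Second, $H/Q$ is generated by at most $20$ elements: using $h_{\ldots i_4 i_5} \equiv -h_{\ldots i_5 i_4}$ one may assume $i_4 > i_5$, and using $h_{i_1 i_2 \ldots} \equiv -h_{i_2 i_1 \ldots}$ together with $h_{i_1 i_2 i_3 \ldots} \equiv -h_{i_2 i_3 i_1 \ldots} - h_{i_3 i_1 i_2 \ldots}$ a short case analysis shows that modulo $Q$ every $h_{i_1 i_2 i_3 i_4 i_5}$ is a $\mathbb Z$-linear combination of those with $i_1 > i_2$, $i_1 > i_3$ and $i_4 > i_5$, of which there are exactly $20$.

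Finally, since $Q \subseteq \mbox{Ker } \psi$, the map $\psi$ induces a surjection $\overline{\psi} : H/Q \to U/(U_1 + U_2')$. As $H/Q$ surjects onto a free abelian group of rank $20$, its minimal number of generators is at least $20$; by the second fact it is at most $20$; since this minimal number then coincides with the torsion-free rank of $H/Q$, the group $H/Q$ is free abelian of rank $20$. Consequently $\overline{\psi}$ is a surjective endomorphism of $\mathbb Z^{20}$, hence an isomorphism, so $\mbox{Ker } \psi = Q$. I expect the only genuine work to be the case analysis in the second fact, together with the elementary identification of the correction term in the surjectivity step as an element of $V_5$; everything else is formal.
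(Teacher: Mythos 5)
Your proposal is correct and follows essentially the paper's own argument: your $20$ ``normal-form'' generators $h_{i_1 i_2 i_3 i_4 i_5}$ with $i_1 > i_2, i_3$ and $i_4 > i_5$ are exactly the paper's set ${\mathcal C}_2 \cup {\mathcal C}_3$, and your identification of $\psi (h_{i_1 i_2 i_3 5 i_4})$ and $\psi (h_{5 i_3 i_4 i_1 i_2})$ (via the correction term lying in $V_5$, hence in $U_2'$) with the basis $\overline{{\mathcal B}}_2 \cup \overline{{\mathcal B}}_3$ of $U/(U_1+U_2')$ is precisely the paper's observation that $\widehat{\psi}({\mathcal C}_j) = \overline{{\mathcal B}}_j$. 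The only difference is the finishing step: where the paper concludes $\mbox{\rm Ker }\widehat{\psi} = 0$ directly from the fact that a generating set is mapped bijectively onto a basis of a free abelian group, you count ($20$ generators, surjection onto $\mathbb Z^{20}$) and invoke the structure theorem together with the Hopfian property of $\mathbb Z^{20}$ --- a correct but slightly heavier route to the same conclusion.
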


\begin{proof}
It can be easily seen that $\psi (Q) = 0$ so $Q \subseteq
\mbox{\rm Ker } \psi $. Therefore, one can define a homomorphism
$\widehat{\psi} : H/Q \rightarrow U/(U_1+U_2')$ by $
\widehat{\psi} (h + Q) = \psi (h)$ for each $h \in H$. Let
\begin{gather*}
{\mathcal C}_2 = \Big\{ h_{i_1 i_2 i_3 5 i_4}  + Q \mid \{ i_1,
i_2, i_3, i_4 \} = \{ 1,2,3,4 \} , i_1 > i_2, i_3 \Big\} ,
\\
{\mathcal C}_3 = \Big\{ h_{5 i_3 i_4 i_1 i_2 }  + Q \mid \{ i_1,
i_2, i_3, i_4 \} = \{ 1,2,3,4 \} , i_1 > i_2 \Big\} .
\end{gather*}
and let ${\mathcal C} = {\mathcal C}_2 \cup {\mathcal C}_3$. It
can be easily checked that, modulo $Q$, each element $h_{i_1 i_2
i_3 i_4 i_5}$ can be written as a linear combination of elements
of ${\mathcal C}$ so the quotient group $H/Q$ is generated by the
set ${\mathcal C}$. Note that $\widehat{\psi} ({\mathcal C}_j) =
\overline{ {\mathcal B}}_j$ $(j=2,3)$; hence, $\widehat{\psi}
({\mathcal C})$ is a basis of the free abelian group
$U/(U_1+U_2')$. It follows that $\mbox{\rm Ker } \widehat{\psi} =
0$ so $\mbox{\rm Ker } \psi = Q$, as required.
\end{proof}

Let $P$ be the subgroup of $H$ generated by $Q$ together with all
elements
\[
h_{i_1 i_2 i_3 i_4 i_5} + h_{i_1 i_2 i_4 i_3 i_5}, \  h_{i_1 i_2
i_3 i_4 i_5} + h_{i_1 i_4 i_3 i_2 i_5}.
\]
Recall that $(\overline{R}_5 \cap \overline{T}^{(4)}_5) / (U_1 +
U_2' )$ is generated by the images of the elements
(\ref{generators2}), that is, by the images of $[x_{i_1}, x_{i_2},
x_{i_3}] [x_{i_4}, x_{i_5}] + [x_{i_1}, x_{i_2}, x_{i_4}]
[x_{i_3}, x_{i_5}] + I$ and $[x_{i_1}, x_{i_2}, x_{i_3}] [x_{i_4},
x_{i_5}] + [x_{i_1}, x_{i_4}, x_{i_3}] [x_{i_2}, x_{i_5}] + I$
where $\{ i_1, i_2, i_3, i_4, i_5 \} = \{ 1, 2, 3, 4, 5 \}$. These
images coincide with $\psi (h_{i_1 i_2 i_3 i_4 i_5} + h_{i_1 i_2
i_4 i_3 i_5})$ and $\psi (h_{i_1 i_2 i_3 i_4 i_5} + h_{i_1 i_4 i_3
i_2 i_5})$, respectively; since $Q \subset P$, we have $P =
\psi^{-1} \Big( (\overline{R}_5 \cap
\overline{T}^{(4)}_5)/(U_1+U_2')\Big) $.

To complete the proof of Proposition \ref{prop_main} we need the
following.

\begin{lemma}\label{hnotinP}
$h_{1 2 3 4 5} \notin P$.
\end{lemma}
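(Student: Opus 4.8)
The plan is to construct a group homomorphism $\theta \colon H \to \mathbb Z / 3 \mathbb Z$ with $\theta(P) = 0$ and $\theta(h_{12345}) \neq 0$; this immediately gives $h_{12345} \notin P$. For a tuple $(i_1, i_2, i_3, i_4, i_5)$ that is a permutation of $(1,2,3,4,5)$ write $\mbox{sgn}(i_1, i_2, i_3, i_4, i_5) \in \{ +1, -1 \}$ for the sign of the permutation $k \mapsto i_k$. Since $H$ is free abelian on the generators $h_{i_1 i_2 i_3 i_4 i_5}$, there is a well-defined homomorphism $\theta \colon H \to \mathbb Z/3\mathbb Z$ determined by $\theta(h_{i_1 i_2 i_3 i_4 i_5}) = \mbox{sgn}(i_1, i_2, i_3, i_4, i_5) + 3\mathbb Z$. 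Then $\theta(h_{12345}) = 1 + 3\mathbb Z \neq 0$, so the whole task reduces to verifying that $\theta$ annihilates each of the five families of generators of $P$: the three families generating $Q$ together with the two extra families $h_{i_1 i_2 i_3 i_4 i_5} + h_{i_1 i_2 i_4 i_3 i_5}$ and $h_{i_1 i_2 i_3 i_4 i_5} + h_{i_1 i_4 i_3 i_2 i_5}$.

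First I would dispose of the four ``sign-reversing'' families. Each of the generators $h_{i_1 i_2 i_3 i_4 i_5} + h_{i_2 i_1 i_3 i_4 i_5}$, $h_{i_1 i_2 i_3 i_4 i_5} + h_{i_1 i_2 i_3 i_5 i_4}$ (from $Q$) and $h_{i_1 i_2 i_3 i_4 i_5} + h_{i_1 i_2 i_4 i_3 i_5}$, $h_{i_1 i_2 i_3 i_4 i_5} + h_{i_1 i_4 i_3 i_2 i_5}$ (the extra ones) has the shape $h_{\mathbf i} + h_{\mathbf i'}$ where the tuple $\mathbf i'$ is obtained from $\mathbf i$ by transposing two of its five entries; hence $\mbox{sgn}(\mathbf i') = - \mbox{sgn}(\mathbf i)$ and $\theta(h_{\mathbf i} + h_{\mathbf i'}) = 0$. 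The last family is the Jacobi-type one, $h_{i_1 i_2 i_3 i_4 i_5} + h_{i_2 i_3 i_1 i_4 i_5} + h_{i_3 i_1 i_2 i_4 i_5}$: here the three index tuples differ by cyclic rotations of their first three entries, i.e. by the even permutations $(1\,2\,3)$ and $(1\,3\,2)$ of the positions, so they all have the same sign; therefore $\theta$ sends this generator to $3\, \mbox{sgn}(i_1, i_2, i_3, i_4, i_5) + 3\mathbb Z = 0$. Thus $\theta(P) = 0$ while $\theta(h_{12345}) \neq 0$, which proves Lemma \ref{hnotinP}; combined with the preceding lemmas of Sections 2--3 this completes the proof of Proposition \ref{prop_main}, and hence of Theorem \ref{theorem1}.

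I do not expect a genuine obstacle here; the argument is a single well-chosen sign homomorphism. The only delicate point — which is really the whole point — is that the Jacobi generator involves \emph{even} permutations of the entries, so that it contributes $3\, \mbox{sgn}(\mathbf i)$ rather than $0$ to $\theta$; this is exactly the source of the $3$-torsion, in the spirit of the Remark following Lemma \ref{in}. Equivalently, one may package the same computation structurally: if $P' \subseteq P$ denotes the subgroup generated by the four sign-reversing families, then $H/P' \cong \mathbb Z$ via $h_{\mathbf i} \mapsto \mbox{sgn}(\mathbf i)$, with the image of $h_{12345}$ a generator, and $P/P'$ is the image of the subgroup generated by the Jacobi elements, which maps onto $3\mathbb Z$; hence $H/P \cong \mathbb Z/3\mathbb Z$ and $h_{12345}$ is a nonzero element of order $3$ in $H/P$.
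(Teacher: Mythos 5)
Your proof is correct and is essentially the paper's own argument: the paper defines the same sign map $\mu : H \rightarrow \mathbb Z$, $\mu(h_{i_1 i_2 i_3 i_4 i_5}) = \mbox{sgn}(\sigma)$, checks that the sign-reversing generators map to $0$ and the Jacobi generators to $\pm 3$, so $\mu(P) = 3\,\mathbb Z$ while $\mu(h_{12345}) = 1$. Your only (immaterial) variation is to reduce modulo $3$ and work with a homomorphism into $\mathbb Z / 3 \mathbb Z$ instead of into $\mathbb Z$.
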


\begin{proof}
Let $\mu : H \rightarrow \mathbb Z$ be the homomorphism of $H$
into $\mathbb Z$ defined by
\[ \mu (h_{i_1 i_2 i_3 i_4 i_5})= \mbox{sgn} (\sigma )
\]
where $\sigma = \left(
\begin{array}{ccccc}
1 & 2 & 3 & 4 & 5 \\
i_1 & i_2 & i_3 & i_4 & i_5
\end{array}
\right)$. Then
\begin{multline*}
\mu (h_{i_1 i_2 i_3 i_4 i_5} + h_{i_2 i_1 i_3 i_4 i_5}) = \mu (
h_{i_1 i_2 i_3 i_4 i_5} + h_{i_1 i_2 i_3 i_5 i_4})  \\
= \mu (h_{i_1 i_2 i_3 i_4 i_5} + h_{i_1 i_2 i_4 i_3 i_5}) = \mu (
h_{i_1 i_2 i_3 i_4 i_5} + h_{i_1 i_4 i_3 i_2 i_5}) = 0
\end{multline*}
and
\[
\mu (h_{i_1 i_2 i_3 i_4 i_5} + h_{i_2 i_3 i_1 i_4 i_5} + h_{i_3
i_1 i_2 i_4 i_5}) = \pm 3
\]
so $\mu (P) = 3 \, \mathbb Z$. On the other hand, $\mu (h_{12345})
= 1 \notin 3 \, \mathbb Z $ so $h_{12345} \notin P$.
\end{proof}

Now we are in a position to complete the proof of Proposition
\ref{prop_main}. Let
\[
\eta : U/(U_1+U_2') \rightarrow U/(\overline{R}_5 \cap
\overline{T}^{(4)}_5)
\]
be the natural epimorphism and let
\[
\overline{\psi} = \psi \circ \eta : H \rightarrow
U/(\overline{R}_5 \cap \overline{T}^{(4)}_5).
\]
Then $\mbox{\rm Ker } \overline{\psi} = \psi^{-1} \Big(
(\overline{R}_5 \cap \overline{T}^{(4)}_5)/(U_1+U_2')\Big) = P$.
It follows from Lemma \ref{hnotinP} that $\overline{\psi}
(h_{12345}) \ne 0$, that is, $[x_1,x_2,x_3][x_4,x_5] +
(\overline{R}_5 \cap \overline{T}^{(4)}_5) \ne (\overline{R}_5
\cap \overline{T}^{(4)}_5)$. Hence, $v + I =
[x_1,x_2,x_3][x_4,x_5] + I \notin (\overline{R}_5 \cap
\overline{T}^{(4)}_5)$. Since $v + I \in \overline{R}_5$, we have
$v + I \notin \overline{T}^{(4)}_5 = (T^{(4)}_5 + I)/I$, that is,
$v \notin T^{(4)}_5 + I$. The proof of Proposition \ref{prop_main}
is completed.

\section{Proof of Theorem 1.3}

Let $\mathbb Q$ be the field of rationals and let $\mathbb Q
\langle X \rangle$ be the free unitary associative $\mathbb
Q$-algebra on the free generating set $X$, $\mathbb Z \langle X
\rangle \subset \mathbb Q \langle X \rangle$. Let $T_{\mathbb
Q}^{(3,2)}$ be the ideal in $\mathbb Q \langle X \rangle$
generated by all elements $[a_1, a_2, a_3] [a_4, a_5]$ and $[a_1,
a_2, a_3, a_4]$ where $a_i \in \mathbb Q \langle X \rangle$. It is
clear that $T^{(3,2)} \subseteq T_{\mathbb Q}^{(3,2)} \cap \mathbb
Z \langle X \rangle$.

The idea of the proof of Theorem \ref{ZX/T32} is similar to one
used in \cite{BEJKL12} to prove that the additive group of
$\mathbb Z \langle X \rangle /T^{(3)}$ is free abelian. We will
define a certain set ${\mathcal D} \subset \mathbb Z \langle X
\rangle$ and prove that, on one hand, $\{ d + T_{\mathbb
Q}^{(3,2)} \mid d \in {\mathcal D} \}$ is a $\mathbb Q$-basis of
$\mathbb Q \langle X \rangle / T_{\mathbb Q}^{(3,2)}$ and, on the
other hand, $\{ d + T^{(3,2)} \mid d \in {\mathcal D} \}$
generates the additive group of $\mathbb Z \langle X \rangle /
T^{(3,2)}$. Then $\{ d + T^{(3,2)} \mid d \in {\mathcal D} \}$ is
a linearly independent generating set of the additive group of
$\mathbb Z \langle X \rangle / T^{(3,2)}$ so this additive group
is free abelian (with a basis $\{ d + T^{(3,2)} \mid d \in {\mathcal D} \}$).

The following lemma is well-known (see \cite[Theorem 4.3]{EKM09},
\cite[Lemma 1]{Gordienko07}, \cite[Lemma 3]{Latyshev65},
\cite[Theorem 1]{Volichenko78}).

\begin{lemma}\label{3x2}
For all $a_1, \dots , a_{6} \in \mathbb Q \langle X \rangle$,
\[
[a_1, a_2][a_3, a_4] [a_5, a_6] + [a_1, a_3][a_2, a_4] [a_5, a_6]
\in T^{(3,2)}_{\mathbb Q} .
\]
\end{lemma}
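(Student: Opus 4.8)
The plan is to deduce Lemma \ref{3x2} from the relation \eqref{permutation2} already established in the proof of Lemma \ref{in}, by multiplying on the right by one more commutator and passing from $T^{(4)}$ to $T^{(3,2)}_{\mathbb Q}$. Concretely, I would first observe that since $[a_4,a_5] \in \mathbb Q\langle X\rangle$, we may substitute it for the single variable carrying the last commutator slot. Working modulo $T^{(3,2)}_{\mathbb Q}$ rather than $T^{(4)}$ costs nothing, because $T^{(4)} \subseteq T^{(3,2)} \subseteq T^{(3,2)}_{\mathbb Q}$ and, moreover, since $T^{(3,2)}_{\mathbb Q}$ already contains every product $[b_1,b_2,b_3][b_4,b_5]$, the congruence \eqref{permutation2} becomes the much cruder statement that $[a_{\sigma(1)},a_{\sigma(2)},a_{\sigma(3)}][a_{\sigma(4)},a_{\sigma(5)}] \in T^{(3,2)}_{\mathbb Q}$ for every $\sigma$, which is immediate. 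So the $T^{(4)}$-level antisymmetry is not quite what is needed; instead the right tool is the congruence $[a_3,a_5][a_1,a_2,a_4] + [a_1,a_2,a_3][a_4,a_5] \in T^{(4)}$ obtained in the first paragraph of the proof of Lemma \ref{32+32T4}, together with the basic rewriting rules for commutators of products.

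The key computation I would carry out is the analogue, one degree higher, of the identity used to prove Lemma \ref{32+32T4}. Namely, expand $[a_1,a_2, a_3a_4, a_5, a_6]$ using $[uv,w] = u[v,w] + [u,w]v$ repeatedly: writing $[a_1,a_2,a_3a_4] = a_3[a_1,a_2,a_4] + [a_1,a_2,a_3]a_4$, one gets
\[
[a_1,a_2,a_3a_4,a_5,a_6] = a_3[a_1,a_2,a_4,a_5,a_6] + [a_1,a_2,a_3,a_5,a_6]a_4 + (\text{terms involving }[a_3,\ast]\text{ and }[\ast,a_4]).
\]
All terms of the form $a_3(\cdots)$, $(\cdots)a_4$ with $(\cdots) \in \gamma_4$ lie in $T^{(4)} \subseteq T^{(3,2)}_{\mathbb Q}$, and the left-hand side lies in $T^{(4)}$ as well since it is a fifth-order commutator. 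The cross terms that survive are products of the shape $[a_3,a_5,a_6][a_1,a_2,a_4]$, $[a_3,a_5][a_1,a_2,a_4,a_6]$, $[a_3,a_6][a_1,a_2,a_4,a_5]$ and $[a_1,a_2,a_3][a_4,a_5,a_6]$ and their permuted companions; every such product is a product of a commutator of weight $\ge 3$ with a commutator of weight $\ge 2$, hence lies in $T^{(3,2)}_{\mathbb Q}$. This shows $[a_1,a_2,a_3][a_4,a_5,a_6] \in T^{(3,2)}_{\mathbb Q}$, and more relevantly that products of three commutators of weights $2,2,2$ can be symmetrised. To get the precise statement, I would then repeat the bracket-of-a-product trick with $a_3a_4$ replaced appropriately so that the two weight-two commutators $[a_1,a_2]$ and $[a_3,a_4]$ appear, multiply the resulting $T^{(4)}$-congruence on the right by $[a_5,a_6]$, and discard every term that is now a product of a weight-$\ge 3$ commutator with another commutator (these are in $T^{(3,2)}_{\mathbb Q}$) and every term that is a product of four or more commutators (also in $T^{(3,2)}_{\mathbb Q}$, being a fortiori a product of a weight-$\ge 3$ commutator with another commutator after collecting two factors, or simply lying in $\gamma_4$-generated ideal). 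What remains is exactly $[a_1,a_2][a_3,a_4][a_5,a_6] + [a_1,a_3][a_2,a_4][a_5,a_6]$ modulo $T^{(3,2)}_{\mathbb Q}$.

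Alternatively — and this may be the cleanest route — one can bypass the degree-six expansion entirely: start from the first congruence of Lemma \ref{32+32T4}, $[b_1,b_2,b_3][b_4,b_5] + [b_1,b_2,b_4][b_3,b_5] \in T^{(4)}$, and specialise $b_1 = a_1$, $b_2 \mapsto$ (a product) in such a way that $[b_1,b_2,b_3]$ unfolds into $[a_1,a_2][a_3,a_4]$ plus lower-complexity junk. The identity $[a_1, a_2a_3] = a_2[a_1,a_3] + [a_1,a_2]a_3$ gives $[[a_1,a_2a_3],a_4] = a_2[a_1,a_3,a_4] + [a_2,a_4][a_1,a_3] + [a_1,a_2][a_3,a_4] + [a_1,a_2,a_4]a_3$, so modulo left/right multiples of $\gamma_3$-commutators (all in $T^{(3,2)}_{\mathbb Q}$ once multiplied by a further commutator), $[a_1,a_2][a_3,a_4] \equiv [[a_1,a_2a_3],a_4] \pmod{T^{(3,2)}_{\mathbb Q}\text{-type terms}}$ after one more right multiplication by $[a_5,a_6]$. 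Feeding this into the already-symmetrised relation \eqref{permutation2} (in the form that permuting entries 2,3,4 multiplies by the sign, modulo $T^{(4)}$), and then right-multiplying by $[a_5,a_6]$, yields the claim. I expect the main obstacle to be purely bookkeeping: making sure that in every expansion the "junk" terms are genuinely products of a commutator of weight $\ge 3$ with at least one further commutator (so that they lie in $T^{(3,2)}_{\mathbb Q}$ by definition) rather than, say, a single weight-three commutator times an algebra element, and keeping track of which specialisation $b_i \mapsto (\text{product of } a_j)$ is needed. There is no conceptual difficulty; it is the same mechanism as Lemmas \ref{32+32T4} and \ref{in} one weight higher, with $T^{(3,2)}_{\mathbb Q}$ absorbing all the terms that $T^{(4)}$ could not.
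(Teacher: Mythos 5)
Your guiding idea --- take a generator of $T^{(3,2)}_{\mathbb Q}$ of the form (weight-three commutator with a product substituted into one slot) times $[a_5,a_6]$, expand, and absorb everything except the products of three weight-two commutators --- is exactly the mechanism of the paper, which expands $[a_1a_4,a_2,a_3][a_5,a_6]$. But neither of your concrete attempts carries it out. The expansion of $[a_1,a_2,a_3a_4,a_5,a_6]$ in your second paragraph is a dead end: every term it produces is either a left/right multiple of a commutator of weight $\ge 4$ or a product of a weight-$\ge 3$ commutator with another commutator, hence already visibly in $T^{(3,2)}_{\mathbb Q}$; no product of three weight-two commutators occurs in it, so it gives no relation among such products. (The conclusion you draw from it, $[a_1,a_2,a_3][a_4,a_5,a_6]\in T^{(3,2)}_{\mathbb Q}$, holds by the definition of the ideal, and the claimed ``symmetrisation of $2,2,2$ products'' does not follow.) The version that would give ``exactly'' the desired sum is only promised (``replace $a_3a_4$ appropriately''), never exhibited.

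In your ``cleanest route'' the decisive identity is written down, $[a_1,a_2a_3,a_4]=a_2[a_1,a_3,a_4]+[a_2,a_4][a_1,a_3]+[a_1,a_2][a_3,a_4]+[a_1,a_2,a_4]a_3$, but you then mis-sort the cross term $[a_2,a_4][a_1,a_3]$ into the discardable junk: it is a product of two weight-two commutators, not a multiple of a $\gamma_3$-commutator, and after right multiplication by $[a_5,a_6]$ it is precisely the second summand of the lemma. Dropping it, as your congruence $[a_1,a_2][a_3,a_4]\equiv[a_1,a_2a_3,a_4]$ does, would prove $[a_1,a_2][a_3,a_4][a_5,a_6]\in T^{(3,2)}_{\mathbb Q}$, which is false in general (it would contradict the linear independence of ${\mathcal D}_3'$ and ${\mathcal D}_4'$ established in Lemma \ref{basisC}); and the closing appeal to (\ref{permutation2}) cannot repair this, since, as you yourself note in your first paragraph, that congruence is vacuous modulo $T^{(3,2)}_{\mathbb Q}$. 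The repair is one line: keep the cross term. Since $[a_1,a_2a_3,a_4][a_5,a_6]$ and $a_2[a_1,a_3,a_4][a_5,a_6]$ lie in $T^{(3,2)}_{\mathbb Q}$, and $[a_1,a_2,a_4]a_3[a_5,a_6]=a_3[a_1,a_2,a_4][a_5,a_6]+[a_1,a_2,a_4,a_3][a_5,a_6]\in T^{(3,2)}_{\mathbb Q}$, you obtain $[a_1,a_2][a_3,a_4][a_5,a_6]+[a_2,a_4][a_1,a_3][a_5,a_6]\in T^{(3,2)}_{\mathbb Q}$; commuting the first two factors of the last product costs $[a_2,a_4,[a_1,a_3]][a_5,a_6]$, again a generator of $T^{(3,2)}_{\mathbb Q}$, and the lemma follows --- which is, up to the choice of slot carrying the product, the paper's own computation.
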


\begin{proof} We have
\begin{multline*}
[a_1 a_4, a_2, a_3][a_5, a_6] = [a_1 [a_4, a_2] + [a_1, a_2] a_4,
a_3] [a_5, a_6] \\ = (a_1 [a_4, a_2, a_3] + [a_1,a_3][a_4, a_2] +
[a_1, a_2][a_4, a_3] \\ + [a_1, a_2, a_3] a_4) [a_5, a_6] \in
T^{(3,2)}_{\mathbb Q}.
\end{multline*}
Since $[a_1 a_4, a_2, a_3][a_5, a_6] \in T^{(3,2)}_{\mathbb Q}$,
$a_1 [a_4, a_2, a_3] [a_5, a_6] \in T^{(3,2)}_{\mathbb Q}$ and
\[
[a_1, a_2, a_3] a_4 [a_5, a_6] =  a_4 [a_1, a_2, a_3] [a_5, a_6] +
[a_1, a_2, a_3, a_4] [a_5, a_6] \in T^{(3,2)}_{\mathbb Q},
\]
we have
\[
[a_1, a_2][a_3, a_4] [a_5, a_6] + [a_1, a_3][a_2, a_4] [a_5, a_6]
\in T^{(3,2)}_{\mathbb Q},
\]
as required.
\end{proof}

Since $[b_1,b_2][b_3,b_4] \equiv [b_3,b_4][b_1,b_2]
\pmod{T^{(3,2)}_{\mathbb Q}}$ for all $b_1,b_2,b_3,b_4 \in \mathbb
Q \langle X \rangle$, Lemma \ref{3x2} implies the following.

\begin{corollary}
For all $k \ge 3$, all $\sigma \in S_{2k}$ and all $a_1, \dots ,
a_{2k} \in \mathbb Q \langle X \rangle$, we have
\begin{multline}\label{sigma2}
[a_1, a_2] [a_3, a_4] \dots [a_{2k-1}, a_{2k}] \\
\equiv \mbox{sgn} (\sigma )[a_{\sigma (1)}, a_{\sigma (2)}]
[a_{\sigma (3)}, a_{\sigma (4)}] \dots [a_{\sigma (2k-1)},
a_{\sigma (2k)}] \pmod{T^{(3,2)}_{\mathbb Q}}.
\end{multline}
\end{corollary}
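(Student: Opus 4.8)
The plan is to fix $k \ge 3$ and let $G \subseteq S_{2k}$ be the set of all permutations $\sigma$ for which the congruence $(\ref{sigma2})$ holds for every choice of $a_1, \dots , a_{2k} \in \mathbb Q \langle X \rangle$. I would first check that $G$ is a subgroup of $S_{2k}$: if $\sigma , \tau \in G$, then substituting $a_{\sigma (i)}$ for $a_i$ in the congruence witnessing $\tau \in G$ and combining the result with the congruence witnessing $\sigma \in G$ yields the congruence $(\ref{sigma2})$ for the permutation $\sigma \tau$, the signs multiplying correctly because $\mbox{sgn}$ is a homomorphism; since $G$ contains the identity and $S_{2k}$ is finite, $G$ is a subgroup. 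It therefore suffices to show that $G$ contains a generating set of $S_{2k}$.

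Next I would exhibit three families of elements of $G$. First, reversing a single commutator factor via $[b,c] = -[c,b]$ shows that each transposition $(2i-1\ 2i)$ lies in $G$, the sign $-1$ agreeing with the coefficient this produces. Second, the congruence $[b_1,b_2][b_3,b_4] \equiv [b_3,b_4][b_1,b_2] \pmod{T^{(3,2)}_{\mathbb Q}}$ noted above lets one reorder the $k$ commutator factors arbitrarily; the corresponding permutations of $\{1, \dots , 2k\}$ permute the blocks $\{1,2\}, \{3,4\}, \dots , \{2k-1,2k\}$ order-preservingly, are even, and lie in $G$. The permutations from these first two families generate the hyperoctahedral group $B_k \subseteq S_{2k}$ of order $2^k\, k!$. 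Third — and this is the only place the hypothesis $k \ge 3$ is used — Lemma \ref{3x2} gives $[a_1,a_2][a_3,a_4][a_5,a_6] \equiv - [a_1,a_3][a_2,a_4][a_5,a_6] \pmod{T^{(3,2)}_{\mathbb Q}}$, and multiplying this relation on the right by $[a_7,a_8] \cdots [a_{2k-1},a_{2k}]$ — which is allowed because $T^{(3,2)}_{\mathbb Q}$ is a two-sided ideal — shows that the transposition $(2\ 3)$ lies in $G$.

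Finally I would close the argument group-theoretically. Given two distinct positions $p, q$ lying in distinct blocks, there is $h \in B_k$ with $h(2) = p$ and $h(3) = q$: choose the underlying block permutation so that blocks $1$ and $2$ are sent to the blocks containing $p$ and $q$, and orient those two blocks appropriately. Then $h (2\ 3) h^{-1} = (p\ q)$, so $G$ contains every transposition of two positions from distinct blocks; combined with the transpositions $(2i-1\ 2i) \in G$, this shows that $G$ contains all transpositions, hence $G = S_{2k}$ and $(\ref{sigma2})$ holds for all $\sigma \in S_{2k}$. The main point requiring a little care is this last step — verifying that $B_k$ acts richly enough on ordered pairs of positions lying in distinct blocks — together with keeping the sign conventions consistent across the three families; everything else is routine.
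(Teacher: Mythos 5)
Your proof is correct and follows essentially the same route the paper intends: the congruence is reduced to the three elementary moves — the sign change $[b,c]=-[c,b]$, the commutation of commutator factors modulo $T^{(3,2)}_{\mathbb Q}$, and the relation of Lemma \ref{3x2} extended by right multiplication — and these are shown to generate all of $S_{2k}$ with consistent signs. The paper leaves this generation argument implicit, and your subgroup-plus-conjugation bookkeeping (via the hyperoctahedral group) is a valid and complete way to fill it in.
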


\noindent Note that the argument above shows also that the
congruence (\ref{sigma2}) holds in  $\mathbb Z \langle X \rangle$
modulo $T^{(3,2)}$.

Let $C$ be the unitary subalgebra in $\mathbb Q \langle X \rangle$
generated by all commutators $[x_{i_1}, x_{i_2}, \dots , x_{i_k}]$
where $k \ge 2$, $i_s \in \mathbb N$ for all $s$, $x_i \in X$ for
all $i$. Let
\begin{gather*}
{\mathcal D}_0' = \{ 1 \}, \quad {\mathcal D}_1' = \{
[x_{i_1},x_{i_2}] \mid i_1 < i_2 \} , \quad {\mathcal D}_2' = \{
[x_{i_1}, x_{i_2}, x_{i_3}] \mid i_1 < i_2, i_1 \le i_3 \} ,
\\
{\mathcal D}'_3 = \{ [x_{i_1}, x_{i_2}] [x_{i_3}, x_{i_4}] \mid
i_1 < i_2, i_3 < i_4, i_1 \le i_3; \mbox{ if } i_1 = i_3 \mbox{
then } i_2 \le i_4 \} ,
\\
{\mathcal D}'_4 = \{ [x_{i_1}, x_{i_2}] [x_{i_3}, x_{i_4}] \dots
[x_{i_{2 k -1}}, x_{i_{2k}}] \mid k \ge 3, i_1 < i_2 < \dots <
i_{2k} \} .
\end{gather*}
Let ${\mathcal D}' = {\mathcal D}_0' \cup {\mathcal D}_1' \cup
{\mathcal D}_2' \cup {\mathcal D}_3' \cup {\mathcal D}_4'$.

In the proof of the next lemma we make use of a result proved in
\cite{Gordienko07} and \cite{Volichenko78}.

\begin{lemma}\label{basisC}
The set $\{ d' + T^{(3,2)}_{\mathbb Q} \mid d' \in {\mathcal D}'
\}$ is a basis of the vector space $(C + T^{(3,2)}_{\mathbb Q}) /
T^{(3,2)}_{\mathbb Q}$ over $\mathbb Q$.
\end{lemma}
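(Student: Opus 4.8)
The plan is to establish the lemma by combining a dimension/spanning argument over $\mathbb{Q}$ with the structure theory of the "proper" part of a relatively free algebra. First I would recall the setup: $C$ is the unitary subalgebra of $\mathbb{Q}\langle X\rangle$ generated by all higher commutators $[x_{i_1},\dots,x_{i_k}]$ ($k\ge 2$), so $C$ is the image in $\mathbb{Q}\langle X\rangle$ of the "proper" (commutator) part under the usual PBW-type decomposition $\mathbb{Q}\langle X\rangle = \mathbb{Q}[X]\otimes C$ of Specht/Drensky. Working modulo $T^{(3,2)}_{\mathbb{Q}}$, a product of higher commutators that involves any factor $[a_1,a_2,a_3,a_4]$ is zero, and any factor of the form $[a_1,a_2,a_3][a_4,a_5]$ is zero; so $(C+T^{(3,2)}_{\mathbb{Q}})/T^{(3,2)}_{\mathbb{Q}}$ is spanned by the images of $1$, single commutators $[x_{i_1},\dots,x_{i_k}]$ (which vanish for $k\ge 4$, and for $k=3$ survive only in the "left-normed" shape), and products $[u_1][u_2]\cdots[u_m]$ of commutators of length $2$ — since a length-$\ge 3$ commutator multiplied by anything nonzero lies in $T^{(3,2)}_{\mathbb{Q}}$ unless it stands alone. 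That already cuts the spanning set down to: $1$; the $[x_{i_1},x_{i_2}]$ with $i_1<i_2$; the $[x_{i_1},x_{i_2},x_{i_3}]$; and products of transpositions-type commutators $[x_{i_1},x_{i_2}][x_{i_3},x_{i_4}]\cdots$.

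Next I would reduce each family to the claimed normal form. For the length-$3$ commutators, the Jacobi identity (together with $[a,b]=-[b,a]$) lets one rewrite any $[x_i,x_j,x_k]$ as a $\mathbb{Z}$-combination of those with $i_1<i_2$ and $i_1\le i_3$; this is the standard basis of the degree-$3$ part of the free Lie algebra restricted to this ordering convention, so the set $\mathcal{D}'_2$ spans the length-$3$ commutator space. For the products of length-$2$ commutators, the key tool is the congruence (\ref{sigma2}): modulo $T^{(3,2)}_{\mathbb{Q}}$ a product $[x_{a_1},x_{b_1}]\cdots[x_{a_m},x_{b_m}]$ is, up to sign, symmetric under permuting the $m$ factors and changes sign when we swap the two entries inside a factor. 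Using this I would sort: within each factor put the smaller index first (picking up a sign), then sort the factors. For $m\ge 3$ the skew-symmetry in (\ref{sigma2}) forces the product to vanish unless all $2m$ indices are distinct — any repeated index makes the product equal to its own negative — and then the sorted representative is exactly the element of $\mathcal{D}'_4$ with $i_1<i_2<\cdots<i_{2m}$. For $m=2$ the congruence (\ref{sigma2}) holds only for $k\ge 3$, so no extra collapsing occurs beyond reordering, and the surviving representatives are precisely $\mathcal{D}'_3$ (the lexicographic condition "$i_1\le i_3$, and if $i_1=i_3$ then $i_2\le i_4$" is just the choice of one representative under swapping the two factors). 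Hence $\{d'+T^{(3,2)}_{\mathbb{Q}}\mid d'\in\mathcal{D}'\}$ spans.

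The main obstacle — and the part I would lean on the cited results of Gordienko \cite{Gordienko07} and Volichenko \cite{Volichenko78} for — is linear independence. Here one needs that the images of $\mathcal{D}'$ are $\mathbb{Q}$-linearly independent in $(C+T^{(3,2)}_{\mathbb{Q}})/T^{(3,2)}_{\mathbb{Q}}$, equivalently that no nontrivial $\mathbb{Q}$-combination of the elements of $\mathcal{D}'$ lies in $T^{(3,2)}_{\mathbb{Q}}$. Those papers determine a basis of the relatively free algebra $\mathbb{Q}\langle X\rangle/T^{(3,2)}_{\mathbb{Q}}$ (the variety of Lie-nilpotent-of-class-$3$ algebras whose commutator ideal squared is further constrained), or at least the dimensions of its multihomogeneous components; I would quote the statement that gives, in each multidegree, the exact dimension of the proper part, and check that it matches the number of elements of $\mathcal{D}'$ of that multidegree. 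Since we have already shown $\mathcal{D}'$ spans, matching the count of a known basis forces $\mathcal{D}'$ to be a basis. The bookkeeping — splitting by multidegree $(d_1,d_2,\dots)$ and confirming $|\mathcal{D}'\cap(\text{multidegree})|$ equals the Gordienko–Volichenko dimension — is the one place where I expect to have to be careful rather than invoke a one-line identity, but it is finite case analysis: degree $0$ gives $1$; multidegrees with two $1$'s give the single $[x_i,x_j]$; multidegrees of total degree $3$ give the Lie part of dimension matching $|\mathcal{D}'_2|$; multidegrees realized by two or more disjoint pairs give $\mathcal{D}'_3$ or $\mathcal{D}'_4$; all other multidegrees contribute $0$ on both sides. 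This completes the proof.
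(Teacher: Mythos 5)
Your spanning argument is essentially identical to the paper's: reduce to $1$, single commutators of length $2$ and $3$, and products of length-$2$ commutators, then normalize using anticommutativity, the Jacobi identity, and the congruence (\ref{sigma2}) (over $\mathbb Q$ the relation $2f\in T^{(3,2)}_{\mathbb Q}$ does kill repeated indices when $k\ge 3$, as you say). The gap is in the linear independence half. You propose to quote from \cite{Gordienko07} and \cite{Volichenko78} ``the exact dimension of the proper part in each multidegree'' and then match counts. But those sources, as used in the paper, give information about the \emph{multilinear} components only (a basis, equivalently the codimensions/cocharacter data of the $T$-ideal generated by $[x_1,x_2,x_3,x_4]$); they do not hand you the dimensions of the non-multilinear multihomogeneous components of $(C+T^{(3,2)}_{\mathbb Q})/T^{(3,2)}_{\mathbb Q}$. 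Since you have only an upper bound from spanning, ``matching the count of a known basis'' in a non-multilinear multidegree presupposes exactly the lower bound that has to be proved, namely that the single element of ${\mathcal D}'$ sitting in such a multidegree is nonzero modulo $T^{(3,2)}_{\mathbb Q}$. Your proposal contains no argument for this.

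The paper closes precisely this point as follows. The non-multilinear elements of ${\mathcal D}'$ are the length-$3$ commutators with a repeated index and the products $[x_{i_1},x_{i_2}][x_{i_3},x_{i_4}]$ with a repeated index; each is the unique element of ${\mathcal D}'$ in its multidegree, so by the multihomogeneous decomposition of $\mathbb Q\langle X\rangle/T^{(3,2)}_{\mathbb Q}$ it suffices to show each one lies outside $T^{(3,2)}_{\mathbb Q}$. For the degree-$3$ elements this is immediate, since $T^{(3,2)}_{\mathbb Q}$ contains no nonzero polynomial of degree less than $4$. For the degree-$4$ elements one reduces, using that $T^{(3,2)}_{\mathbb Q}$ is a $T$-ideal, to $[x_1,x_3][x_2,x_3]$ and $[x_1,x_2]^2$, and then a partial linearization (substitute $x_3\mapsto x_3+x_4$, subtract the homogeneous pieces) shows that membership in $T^{(3,2)}_{\mathbb Q}$ would force $[x_1,x_3][x_2,x_4]+[x_1,x_4][x_2,x_3]\in T^{(3,2)}_{\mathbb Q}$, contradicting the multilinear independence furnished by \cite[Lemma 3]{Gordienko07} or \cite[Theorem 1]{Volichenko78}. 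Some such argument (or an actual derivation of the multihomogeneous dimensions from the cocharacter structure, which is more work than a citation) is needed to complete your proof; as written, the non-multilinear case is asserted rather than proved.
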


\begin{proof}
Note first that $C$ is spanned by the set ${\mathcal D}'$ modulo
$T^{(3,2)}_{\mathbb Q}$.

Indeed, it is clear that $C$ is spanned by $1$ and the products
$c_1 c_2 \dots c_m$ $(m \ge 1)$ where each $c_l$ is a commutator
of length $k(l) \ge 2$, $c_l = [x_{i_{l1}}, x_{i_{l2}}, \dots ,
x_{i_{l k(l)}}]$. Note that $c_i c_j = c_j c_i
\pmod{T^{(3,2)}_{\mathbb Q}}$ for all $i,j$. Further, if, for some
$l$, $c_l$ is a commutator of length $k(l) \ge 4$ then $c_1 c_2
\dots c_m \in T^{(3,2)}_{\mathbb Q}$. If, for some $l$, $c_l =
[x_{i_{l1}}, x_{i_{l2}}, x_{i_{l3}}]$ is a commutator of length
$3$ and $m >1$ then again $c_1 c_2 \dots c_m \in
T^{(3,2)}_{\mathbb Q}$. It follows that $(C + T^{(3,2)}_{\mathbb
Q}) / T^{(3,2)}_{\mathbb Q}$ is spanned by $1$, the commutators
$[x_{i_1},x_{i_2},x_{i_3}] + T^{(3,2)}_{\mathbb Q}$ $(i_s \in
\mathbb N )$ and the products $c_1 c_2 \dots c_m +
T^{(3,2)}_{\mathbb Q}$ where $m \ge 1$ and each $c_l$ is a
commutator of length $2$.

It is clear that in a generator $[x_{i_1}, x_{i_2}] +
T^{(3,2)}_{\mathbb Q}$ of $(C + T^{(3,2)}_{\mathbb Q}) /
T^{(3,2)}_{\mathbb Q}$ we can assume $i_1 < i_2$. Similarly, in a
generator $[x_{i_1}, x_{i_2}, x_{i_3}] + T^{(3,2)}_{\mathbb Q}$ we
may assume $i_1 < i_2$, $i_1 \le i_3$ and in a generator
$[x_{i_1},x_{i_2}] [x_{i_3},x_{i_4}] + T^{(3,2)}_{\mathbb Q}$ we
may assume that $i_1 < i_2$, $i_3 < i_4$ and $i_1 \le i_3$; if
$i_1 = i_3$ we may also assume $i_2 \le i_4$. Finally, it follows
immediately from (\ref{sigma2}) that in a generator
\[
[x_{i_1},x_{i_2}] [x_{i_3},x_{i_4}] \dots [x_{i_{2k-1}},
x_{i_{2k}}] + T^{(3,2)}_{\mathbb Q} \qquad (k \ge 3)
\]
we can assume $i_1 < i_2 < i_3 < \dots < i_{2k}$. Thus, the set
$\{ d' + T^{(3,2)}_{\mathbb Q} \mid d' \in {\mathcal D}' \}$ spans
the vector space $(C + T^{(3,2)}_{\mathbb Q}) / T^{(3,2)}_{\mathbb
Q}$ over $\mathbb Q$, as claimed.

Now to prove Lemma \ref{basisC} it remains to check that the set
$\{ d' + T^{(3,2)}_{\mathbb Q} \mid d' \in {\mathcal D}' \}$ is
linearly independent in $\mathbb Q \langle X \rangle /
T^{(3,2)}_{\mathbb Q}$.

Let $R(m_1, m_2, \dots , m_k)$ ($k \ge 0$, $m_k>0$ if $k>0$) be
the linear span in $\mathbb Q \langle X \rangle$ of all monomials
$x_{i_1} \dots x_{i_s}$ that contain $m_1$ variables $x_1$, $m_2$
variables $x_2$, $\dots$, $m_k$ variables $x_k$ and do not contain
any other variable. A polynomial $f \in \mathbb Q \langle X
\rangle$ is called \textit{multilinear} if $f \in R(m_1, \dots ,
m_k)$ where $k>0$ and $m_i \in \{ 0, 1 \}$ for all $i$. It follows
from \cite[Lemma 3]{Gordienko07} or from \cite[Theorem
1]{Volichenko78} that the multilinear elements of ${\mathcal D}'$
are linearly independent modulo $T^{(3,2)}_{\mathbb Q}$. Hence, it
remains to prove that each non-multilinear element of ${\mathcal
D}'$ is not equal, modulo $T^{(3,2)}_{\mathbb Q}$, to a linear
combination of other elements of ${\mathcal D}'$.

Note that $\mathbb Q \langle X \rangle$ is a direct sum of the
vector subspaces $R(m_1, m_2, \dots , m_k)$ ($k \ge 0$, $m_k>0$ if
$k>0$) and $\mathbb Q \langle X \rangle / T^{(3,2)}_{\mathbb Q}$
is a direct sum of the subspaces $(R(m_1, m_2, \dots , m_k) +
T^{(3,2)}_{\mathbb Q}) / T^{(3,2)}_{\mathbb Q}$. The
non-multilinear elements of ${\mathcal D}'$ are as follows:
\begin{enumerate}
\item $[x_{i_1}, x_{i_2}, x_{i_3}] \in {\mathcal D}_2'$ where $i_1
< i_2$ and either $i_3 = i_1$ or $i_3 = i_2$;
\item $[x_{i_1}, x_{i_2}] [x_{i_3}, x_{i_4}] \in {\mathcal D}_3'$
where $i_1 < i_2$, $i_3 < i_4$ and either $i_1 = i_3$, $i_2 \le
i_4$ or $i_1 < i_3$, $i_2 = i_3$ or $i_1 < i_3$, $i_2 = i_4$.
\end{enumerate}
Each non-multilinear element $d'$ above is the only element of
${\mathcal D}'$ that belongs to the corresponding term $R(m_1,
\dots , m_k)$. Hence, to prove that this non-multilinear element
$d'$ is not a linear combination of other elements of ${\mathcal
D}'$ modulo $T^{(3,2)}_{\mathbb Q}$ it suffices to check that $d'$
is not equal to $0$ modulo $T^{(3,2)}_{\mathbb Q}$, that is, $d'
\notin T^{(3,2)}_{\mathbb Q}$.

If $d' \in {\mathcal D}_2'$ then $d' \notin T^{(3,2)}_{\mathbb Q}$
because the elements of ${\mathcal D}_2'$ are of degree $3$ and
$T^{(3,2)}_{\mathbb Q}$ does not contain non-zero polynomials of
degree less than $4$. Therefore, it remains to check that $d'
\notin T^{(3,2)}_{\mathbb Q}$ for the non-multilinear elements of
${\mathcal D}_3'$. Since the ideal $T^{(3,2)}_{\mathbb Q}$ is
invariant under permutations of the set $X = \{ x_1, x_2,
\dots \}$ of variables, it suffices to check that
$[x_1,x_3][x_2,x_3] \notin T^{(3,2)}_{\mathbb Q}$ and $[x_1,x_2]^2
\notin T^{(3,2)}_{\mathbb Q}$.

Suppose, in order to get a contradiction, that $[x_1,x_3][x_2,x_3]
\in T^{(3,2)}_{\mathbb Q}$. Then $[x_1, x_3 + x_4][x_2,x_3 + x_4]
\in T^{(3,2)}_{\mathbb Q}$ so
\begin{multline*}
[x_1, x_3 + x_4][x_2,x_3 + x_4] - [x_1,x_3][x_2,x_3] -
[x_1,x_4][x_2,x_4] \\ = [x_1,x_3][x_2,x_4] + [x_1,x_4][x_2,x_3]
\in T^{(3,2)}_{\mathbb Q} .
\end{multline*}
However, $[x_1,x_3][x_2,x_4] + [x_1,x_4][x_2,x_3]$ is a sum of $2$
multilinear elements of ${\mathcal D}'$ and, as it was mentioned
above, the multilinear elements of ${\mathcal D}'$ are linearly
independent modulo $T^{(3,2)}_{\mathbb Q}$. This contradiction
shows that $[x_1,x_3][x_2,x_3] \notin T^{(3,2)}_{\mathbb Q}$. One
can prove in a similar way that $[x_1,x_2]^2 \notin
T^{(3,2)}_{\mathbb Q}$ as well.

This completes the proof of Lemma \ref{basisC}.
\end{proof}

An ideal $T$ in $\mathbb Q \langle X \rangle$ is called a
\textit{$T$-ideal} if $\phi (T) \subseteq T$ for all endomorphisms
$\phi$ of $\mathbb Q \langle X \rangle$. It is clear that
$T^{(3,2)}_{\mathbb Q}$ is a $T$-ideal in $\mathbb Q \langle X
\rangle$. The following assertion is a particular case of a result
due to Drensky \cite{Drensky84}, see also \cite[Theorem
4.3.11]{Drenskybook}.

\begin{theorem}[\cite{Drensky84}]\label{Drensky}
Let $T$ be a $T$-ideal in $\mathbb Q \langle X \rangle$ and let
${\mathcal E} \subset \mathbb Q \langle X \rangle$ be a set such
that $\{ e + T \mid e \in {\mathcal E} \}$ is a basis of the
vector space $ (C + T)/T$ over $\mathbb Q$.  Then the set
\[
\{ x_{i_1} x_{i_2} \dots x_{i_k} e \mid k \ge 0, i_1 \le i_2 \le
\dots \le i_k, e \in {\mathcal E} \}
\]
is a $\mathbb Q$-basis of $\mathbb Q \langle X \rangle /T$.
\end{theorem}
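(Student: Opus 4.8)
The plan is to reduce the statement to a single structural fact about how the $T$-ideal $T$ sits inside the Poincar\'e--Birkhoff--Witt decomposition of $\mathbb Q \langle X \rangle$. Write $L = L(X)$ for the free Lie ring, so that $\mathbb Q \langle X \rangle = U(L)$ by Witt's theorem, and split $L = L_1 \oplus L'$, where $L_1 = \mathrm{span}(X)$ is the degree-one part and $L' = [L,L]$ is the derived subalgebra (an ideal complementary to $L_1$). Since the left-normed commutators $[x_{i_1}, \dots , x_{i_k}]$ with $k \ge 2$ span $L'$, the subalgebra $C$ is exactly $U(L')$. Choosing an ordered homogeneous basis of $L$ with the $x_i$ first, PBW furnishes a $\mathbb Q$-linear isomorphism $B \otimes C \to \mathbb Q \langle X \rangle$, $b \otimes c \mapsto bc$, where $B = U(L_1)$ is the span of the ordered monomials $x_{i_1} \cdots x_{i_k}$ $(i_1 \le \cdots \le i_k)$; thus every $f$ has a unique expansion $f = \sum_a x^a c_a$ with $x^a$ an ordered monomial and $c_a \in C$. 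I would show that the whole theorem follows once we prove the compatibility $T = B \cdot (T \cap C)$, i.e. that under this isomorphism $T$ corresponds to $B \otimes (T \cap C)$: this yields $\mathbb Q \langle X \rangle / T \cong B \otimes \big( (C+T)/T \big)$, and tensoring the monomial basis of $B$ with the given basis $\{ e + T \mid e \in {\mathcal E} \}$ of $(C+T)/T$ produces precisely the asserted basis.

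The inclusion $B \cdot (T \cap C) \subseteq T$ is immediate. For the reverse inclusion, which is the heart of the matter, I would exploit the only hypothesis not yet used, namely that $T$ is a $T$-ideal. The key devices are the translation endomorphisms $\Phi_\lambda \colon x_i \mapsto x_i + \lambda_i$ $(\lambda_i \in \mathbb Q)$. Because the constants are central, every commutator is fixed by $\Phi_\lambda$, so $\Phi_\lambda$ fixes $C$ pointwise, while on $B$ it acts unitriangularly: in multi-index notation $\Phi_\lambda (x^a) = \sum_{b \le a} \binom{a}{b} \lambda^{a-b} x^b$. Since $T$ is a $T$-ideal it is invariant under every endomorphism, in particular under each $\Phi_\lambda$, so for $f = \sum_a x^a c_a \in T$ we obtain $\Phi_\lambda (f) = \sum_e \lambda^e g_e \in T$ for all $\lambda$, where $g_e = \sum_{b \ge 0} \binom{b+e}{b} x^b c_{b+e}$. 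As $\mathbb Q$ is infinite, a Vandermonde argument forces each coefficient $g_e$ to lie in $T$.

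The crux is then a downward induction on the total degree $|e|$ to deduce $c_a \in T$ for every $a$. For $e$ of maximal degree among the indices occurring, only the $b = 0$ term of $g_e$ survives, giving $c_e = g_e \in T$; in general $g_e - c_e = \sum_{b \ne 0} \binom{b+e}{b} x^b c_{b+e}$, and each $c_{b+e}$ with $b \ne 0$ already lies in $T$ by the inductive hypothesis, since $|b+e| > |e|$, whence $x^b c_{b+e} \in T$ and therefore $c_e \in T$. As each $c_a \in C$, we conclude $c_a \in T \cap C$ and $f \in B \cdot (T \cap C)$, establishing the compatibility. I expect this extraction to be the main obstacle: the translation trick is what converts the $T$-ideal hypothesis into a triangular action making the PBW components individually visible, and one must verify that it is genuinely the $T$-ideal property, not mere two-sidedness, that is used — a plain two-sided ideal such as the one generated by $x_1$, which is not a $T$-ideal, already violates $T = B \cdot (T \cap C)$.

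With the compatibility in hand the assembly is routine. Spanning of $\mathbb Q \langle X \rangle / T$ by the proposed set is clear from the PBW expansion together with the rewriting of each $C$-component modulo $T$ in terms of ${\mathcal E}$ (valid because $\{ e + T \mid e \in {\mathcal E}\}$ is a basis of $(C+T)/T$ and $x^a T \subseteq T$), while linear independence is exactly the content of the isomorphism $\mathbb Q \langle X \rangle / T \cong B \otimes \big((C+T)/T\big)$ supplied by the compatibility. This completes the plan.
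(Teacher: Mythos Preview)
The paper does not prove this theorem: it is quoted as a known result of Drensky, with a reference to \cite{Drensky84} and to \cite[Theorem~4.3.11]{Drenskybook}, and is used as a black box. So there is no ``paper's own proof'' to compare against.

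Your argument is correct and is in fact essentially the standard proof one finds in Drensky's book. The PBW identification $\mathbb Q\langle X\rangle \cong B\otimes C$ with $B$ the span of ordered monomials in the $x_i$ and $C=U([L,L])$ the algebra of ``proper'' polynomials is the right structural setting, and the reduction of the theorem to the compatibility $T=B\cdot(T\cap C)$ is exactly the point. The translation endomorphisms $\Phi_\lambda:x_i\mapsto x_i+\lambda_i$ are the classical device here: they fix $C$ pointwise, act unitriangularly on $B$, and preserve any $T$-ideal, so the polynomial identity $\sum_e\lambda^e g_e\in T$ for all $\lambda$ over an infinite field forces each $g_e\in T$; your downward induction on $|e|$ then cleanly peels off $c_e\in T\cap C$. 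The remark that a mere two-sided ideal (e.g.\ the one generated by $x_1$) need not satisfy $T=B\cdot(T\cap C)$ is a good sanity check that the $T$-ideal hypothesis is genuinely used. One very minor comment: in the Vandermonde step you implicitly use that $f$ involves only finitely many variables, so the relevant $\lambda$'s live in a finite-dimensional parameter space and the usual multivariate polynomial-vanishing argument applies; this is harmless but worth a sentence in a final write-up.
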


Let
\[
{\mathcal D} = \{ x_{i_1} x_{i_2} \dots x_{i_k} d' \mid k \ge 0,
i_1 \le i_2 \le \dots \le i_k, d' \in {\mathcal D}' \} .
\]
By Theorem \ref{Drensky}, Lemma \ref{basisC} implies the
following.

\begin{corollary}\label{basis32Q}
The set $\{ d + T^{(3,2)}_{\mathbb Q} \mid d \in {\mathcal D} \}$
is a basis of $\mathbb Q \langle X \rangle / T^{(3,2)}_{\mathbb
Q}$ over $\mathbb Q$.
\end{corollary}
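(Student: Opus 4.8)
The plan is to obtain the corollary as an immediate application of Theorem~\ref{Drensky}, taking $T = T^{(3,2)}_{\mathbb Q}$ and $\mathcal E = \mathcal D'$. The two hypotheses of that theorem have in fact already been secured in the preceding material. First, $T^{(3,2)}_{\mathbb Q}$ is a $T$-ideal of $\mathbb Q \langle X \rangle$: it is generated by all elements $[a_1,a_2,a_3][a_4,a_5]$ and $[a_1,a_2,a_3,a_4]$ with the $a_i$ ranging over $\mathbb Q \langle X \rangle$, and the image of such a generator under an arbitrary endomorphism $\phi$ is again an element of exactly the same shape (with each $a_i$ replaced by $\phi(a_i)$), so $\phi(T^{(3,2)}_{\mathbb Q}) \subseteq T^{(3,2)}_{\mathbb Q}$; this was already noted just before the statement of Theorem~\ref{Drensky}. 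Second, Lemma~\ref{basisC} says precisely that $\{ d' + T^{(3,2)}_{\mathbb Q} \mid d' \in \mathcal D' \}$ is a $\mathbb Q$-basis of $(C + T^{(3,2)}_{\mathbb Q})/T^{(3,2)}_{\mathbb Q}$, where $C$ is the unitary subalgebra of $\mathbb Q \langle X \rangle$ generated by all commutators $[x_{i_1}, \dots , x_{i_k}]$ $(k \ge 2)$. So both hypotheses hold with $\mathcal E = \mathcal D'$.

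With these verified, Theorem~\ref{Drensky} applies and yields that the set
\[
\{ x_{i_1} x_{i_2} \dots x_{i_k} d' \mid k \ge 0,\ i_1 \le i_2 \le \dots \le i_k,\ d' \in \mathcal D' \}
\]
is a $\mathbb Q$-basis of $\mathbb Q \langle X \rangle / T^{(3,2)}_{\mathbb Q}$. By the very definition of $\mathcal D$ this set is exactly $\{ d + T^{(3,2)}_{\mathbb Q} \mid d \in \mathcal D \}$, which is the assertion of the corollary.

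Since this is essentially a one-line invocation of an already-stated result whose hypotheses are in hand, I do not expect any genuine obstacle; the only points I would pause over are bookkeeping ones. One should check that the assignment $(k,\, i_1 \le \dots \le i_k,\, d') \mapsto x_{i_1} \dots x_{i_k} d'$ is injective, so that $\mathcal D$ is in bijection with the index set appearing in Theorem~\ref{Drensky} and no basis vector is listed twice; this is clear, because from the product one recovers both the ordered monomial prefix $x_{i_1} \dots x_{i_k}$ and the commutator factor $d' \in \mathcal D'$ unambiguously. One should also keep in mind that $\mathcal D \subset \mathbb Z \langle X \rangle$, which holds by construction of $\mathcal D'$ and will be needed later when this corollary is combined with the integral statement that $\{ d + T^{(3,2)} \mid d \in \mathcal D \}$ generates the additive group of $\mathbb Z \langle X \rangle / T^{(3,2)}$ in order to finish the proof of Theorem~\ref{ZX/T32}.
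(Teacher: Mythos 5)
Your proposal is correct and coincides with the paper's argument: the corollary is obtained exactly as an application of Theorem~\ref{Drensky} with $T = T^{(3,2)}_{\mathbb Q}$ (a $T$-ideal, as noted before that theorem) and ${\mathcal E} = {\mathcal D}'$, whose hypothesis is supplied by Lemma~\ref{basisC}.
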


Theorem \ref{ZX/T32} is an immediate corollary of the following
assertion.

\begin{lemma}
The additive group of the quotient ring $\mathbb Z \langle X
\rangle / T^{(3,2)}$ is a free abelian group with a basis $\{ d +
T^{(3,2)} \mid d \in {\mathcal D} \}$.
\end{lemma}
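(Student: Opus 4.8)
The plan is to show that the set $\{d+T^{(3,2)} \mid d \in \mathcal D\}$ is simultaneously a generating set and a linearly independent set for the additive group of $\mathbb Z\langle X\rangle/T^{(3,2)}$, which immediately makes that group free abelian with the stated basis. Linear independence is the easy half: since $T^{(3,2)} \subseteq T^{(3,2)}_{\mathbb Q} \cap \mathbb Z\langle X\rangle$, any $\mathbb Z$-linear dependence among the $d+T^{(3,2)}$ would descend to a $\mathbb Q$-linear dependence among the $d+T^{(3,2)}_{\mathbb Q}$, contradicting Corollary \ref{basis32Q}. So the whole content is the generating claim.

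For the generating claim I would argue that the $\mathbb Z$-span of $\mathcal D$ together with $T^{(3,2)}$ is all of $\mathbb Z\langle X\rangle$, and the natural device is to mimic, over $\mathbb Z$, the structural decomposition behind Drensky's theorem \ref{Drensky}. Concretely: first reduce an arbitrary monomial $x_{i_1}\cdots x_{i_k}$ modulo $T^{(3,2)}$ to a $\mathbb Z$-combination of terms $x_{j_1}\cdots x_{j_r}\, c$ with $j_1\le\cdots\le j_r$ and $c$ a product of commutators, by repeatedly using $ab = ba + [a,b]$ to sort the letters and pushing commutator factors to the right; every such swap is exact over $\mathbb Z$ (no denominators), so this is the $\mathbb Z$-version of the PBW-type straightening used in \ref{Drensky}. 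Next, reduce the commutator part $c$: by the relations in $T^{(3,2)}$ any commutator of length $\ge 4$ and any product of two or more commutators one of which has length $3$ is killed, and $c_ic_j\equiv c_jc_i$, so $c$ becomes $1$, a single $[x_a,x_b,x_c]$, or a product $[x_{a_1},x_{a_2}]\cdots[x_{a_{2t-1}},x_{a_{2t}}]$ — and here one invokes Lemma \ref{basisC}, or rather the \emph{integral} form of its spanning argument, to rewrite such $c$ as a $\mathbb Z$-combination of elements of $\mathcal D'$ modulo $T^{(3,2)}$. The crucial point, which must be checked explicitly, is that every reduction step used in the proof of Lemma \ref{basisC} (sorting indices inside $[x_{i_1},x_{i_2}]$, inside $[x_{i_1},x_{i_2},x_{i_3}]$, inside a length-two product, and the use of (\ref{sigma2}) for longer products) is available over $\mathbb Z$ with integer coefficients; this was flagged after the Corollary to (\ref{sigma2}), where it is noted that (\ref{sigma2}) holds in $\mathbb Z\langle X\rangle$ modulo $T^{(3,2)}$, and the remaining steps are just antisymmetry and Jacobi, which have integer coefficients. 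Combining the two reductions shows $\mathbb Z\langle X\rangle = \mathbb Z\text{-span}(\mathcal D) + T^{(3,2)}$.

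The main obstacle I anticipate is precisely verifying that \emph{no division is ever needed}: Drensky's theorem is stated over $\mathbb Q$, and its proof (and that of Lemma \ref{basisC}) could in principle use rational coefficients in intermediate identities, which would break the argument over $\mathbb Z$. So the real work is to trace through the straightening/reduction and confirm that each identity used — the commutator Leibniz rules, Jacobi, the antisymmetry $[a,b]=-[b,a]$, the sorting of products of commutators via (\ref{sigma2}), and the reductions in Lemma \ref{basisC} — is an identity with integer coefficients valid modulo $T^{(3,2)}$ in $\mathbb Z\langle X\rangle$, not merely modulo $T^{(3,2)}_{\mathbb Q}$. Once that is done, the generating and independence halves fit together: $\{d+T^{(3,2)}\mid d\in\mathcal D\}$ is a linearly independent generating set, hence a $\mathbb Z$-basis, so the additive group of $\mathbb Z\langle X\rangle/T^{(3,2)}$ is free abelian.
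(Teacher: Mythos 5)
Your proposal is correct and follows essentially the same route as the paper: linear independence is deduced from Corollary \ref{basis32Q} via the inclusion $T^{(3,2)} \subseteq T^{(3,2)}_{\mathbb Q} \cap \mathbb Z \langle X \rangle$, and generation is established by carrying out the straightening and the spanning reductions of Lemma \ref{basisC} integrally modulo $T^{(3,2)}$, which is exactly the step the paper declares ``straightforward to check''. If anything, you give more detail than the paper does on the generating half (including the point that the integral form of (\ref{sigma2}) and the Leibniz, antisymmetry and Jacobi identities involve only integer coefficients), so the argument stands as the paper's own proof, slightly expanded.
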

\begin{proof}
It is straightforward to check that, modulo $T^{(3,2)}$, each
element of $\mathbb Z \langle X \rangle$ can be written as a
linear combination of elements of ${\mathcal D}$. Hence, the
set$\{ d + T^{(3,2)} \mid d \in {\mathcal D} \}$ generates the
additive group of $\mathbb Z \langle X \rangle / T^{(3,2)}$.

On the other hand, by Corollary \ref{basis32Q}, the elements of
${\mathcal D}$ are linearly independent modulo $T^{(3,2)}_{\mathbb
Q} \cap \mathbb Z \langle X \rangle$. Since $T^{(3,2)} \subseteq
T^{(3,2)}_{\mathbb Q} \cap \mathbb Z \langle X \rangle$, the
elements of ${\mathcal D}$ are linearly independent modulo
$T^{(3,2)}$ as well (and, since they generate $\mathbb Z \langle X
\rangle$ modulo $T^{(3,2)}$, we have $T^{(3,2)} =
T^{(3,2)}_{\mathbb Q} \cap \mathbb Z \langle X \rangle$).

Thus, the set $\{ d + T^{(3,2)} \mid d \in {\mathcal D} \}$ is a
linearly independent generating set of the additive group of
$\mathbb Z \langle X \rangle / T^{(3,2)}$ so the latter group is
free abelian with a basis $\{ d + T^{(3,2)} \mid d \in {\mathcal
D} \}$.
\end{proof}
The proof of Theorem \ref{ZX/T32} is completed.

\section{Proof of Proposition 1.4}

Let
\[
S = \{ [x_{j_1}, x_{j_2}, x_{j_3}][ x_{j_4}, x_{j_5}] \mid j_1 <
j_2 < j_3 < j_4 < j_5 \}
\]
and let $I^{(3,2)}$ be the ideal in $\mathbb Z \langle X \rangle$
generated by $S$.

\begin{lemma}\label{T32}
$T^{(3,2)} = T^{(4)} + I^{(3,2)}$.
\end{lemma}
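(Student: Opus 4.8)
The plan is to establish the two inclusions $T^{(4)}+I^{(3,2)}\subseteq T^{(3,2)}$ and $T^{(3,2)}\subseteq T^{(4)}+I^{(3,2)}$ separately. The first is immediate: $T^{(4)}\subseteq T^{(3,2)}$ by definition, while every element of the generating set $S$ of $I^{(3,2)}$ is of the form $[a_1,a_2,a_3][a_4,a_5]$ and hence is one of the defining generators of $T^{(3,2)}$, so $I^{(3,2)}\subseteq T^{(3,2)}$ too. All the work is in the reverse inclusion.

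Write $J=T^{(4)}+I^{(3,2)}$; this is a two-sided ideal. Since $T^{(3,2)}$ is the two-sided ideal generated by the elements $[a_1,a_2,a_3,a_4]$ and $[a_1,a_2,a_3][a_4,a_5]$ with $a_i\in\mathbb Z\langle X\rangle$, and $J$ is a two-sided ideal, it suffices to show that each such generator lies in $J$. The commutators $[a_1,a_2,a_3,a_4]$ lie in $\gamma_4\subseteq T^{(4)}\subseteq J$, so the problem reduces to proving that $[a_1,a_2,a_3][a_4,a_5]\in J$ for all $a_1,\dots,a_5\in\mathbb Z\langle X\rangle$. As $[a_1,a_2,a_3][a_4,a_5]$ is multilinear in the $a_i$ and $\mathbb Z\langle X\rangle$ is spanned by monomials, I may assume each $a_i$ is a monomial; if some $a_i=1$ the product vanishes, so I may assume $\deg a_i\ge1$ and argue by induction on $N=\sum_{i=1}^5\deg a_i\ge5$.

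The base case $N=5$, in which $a_i=x_{j_i}$ for single variables, is the crux. If $j_1,\dots,j_5$ are pairwise distinct, let $\sigma\in S_5$ be the permutation for which $(j_{\sigma(1)},\dots,j_{\sigma(5)})$ is increasing; then by the signed-permutation congruence~(\ref{permutation2}) the element $[x_{j_1},x_{j_2},x_{j_3}][x_{j_4},x_{j_5}]$ is congruent modulo $T^{(4)}$ to $\pm[x_{k_1},x_{k_2},x_{k_3}][x_{k_4},x_{k_5}]$ with $k_1<\dots<k_5$, and the latter belongs to $S\subseteq I^{(3,2)}$; hence the product lies in $J$. If instead $j_r=j_s$ for some $r\ne s$, applying the transposition $(r\,s)$ in~(\ref{permutation2}) yields $[x_{j_1},x_{j_2},x_{j_3}][x_{j_4},x_{j_5}]\equiv-[x_{j_1},x_{j_2},x_{j_3}][x_{j_4},x_{j_5}]\pmod{T^{(4)}}$, so $2\,[x_{j_1},x_{j_2},x_{j_3}][x_{j_4},x_{j_5}]\in T^{(4)}$; together with $3\,[x_{j_1},x_{j_2},x_{j_3}][x_{j_4},x_{j_5}]\in T^{(4)}$ from Lemma~\ref{in} and $\gcd(2,3)=1$, this puts the product in $T^{(4)}\subseteq J$.

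For the inductive step $N\ge6$, some $a_i$ has degree $\ge2$; permuting the arguments by means of~(\ref{permutation2}) (at the cost of a sign and modulo $T^{(4)}\subseteq J$) I may assume $a_1=x_jm$ with $m$ a monomial of degree $\ge1$. Expanding $[x_jm,a_2,a_3]=x_j[m,a_2,a_3]+[x_j,a_2,a_3]m$ and then rewriting $[x_j,a_2,a_3]m=m[x_j,a_2,a_3]+[x_j,a_2,a_3,m]$ gives
\[
[a_1,a_2,a_3][a_4,a_5]=x_j[m,a_2,a_3][a_4,a_5]+m[x_j,a_2,a_3][a_4,a_5]+[x_j,a_2,a_3,m][a_4,a_5].
\]
The last summand lies in $T^{(4)}$ because $[x_j,a_2,a_3,m]\in\gamma_4$; the first two summands are left multiples of products of the form $[b_1,b_2,b_3][b_4,b_5]$ of total degree at most $N-1$, hence lie in $J$ by the induction hypothesis and the fact that $J$ is a left ideal. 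This closes the induction, so $T^{(3,2)}\subseteq J$. The main obstacle is the base case, i.e.\ the two reductions using~(\ref{permutation2}): recognising that distinct-index products fall onto $S$ up to sign modulo $T^{(4)}$, while repeated-index products already vanish modulo $T^{(4)}$.
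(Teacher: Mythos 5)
Your overall strategy (reduce to the generators $[a_1,a_2,a_3][a_4,a_5]$ with monomial entries, handle total degree $5$ via the signed permutation congruence (\ref{permutation2}) and Lemma \ref{in}, then induct on total degree by splitting one monomial) is the same as the paper's, and your base case is correct. But the inductive step has a genuine error: the identity you use,
\[
[x_jm,a_2,a_3]=x_j[m,a_2,a_3]+[x_j,a_2,a_3]m,
\]
is false. The Leibniz rule $[ab,c]=a[b,c]+[a,c]b$ applied twice gives
\[
[x_jm,a_2,a_3]=x_j[m,a_2,a_3]+[x_j,a_3][m,a_2]+[x_j,a_2][m,a_3]+[x_j,a_2,a_3]m,
\]
so your displayed decomposition of $[a_1,a_2,a_3][a_4,a_5]$ is missing the two cross terms $[x_j,a_3][m,a_2][a_4,a_5]$ and $[x_j,a_2][m,a_3][a_4,a_5]$. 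These are products of three commutators of length $2$, which are not of the shape $[b_1,b_2,b_3][b_4,b_5]$ covered by your induction hypothesis, and individually they are not elements of $T^{(4)}+I^{(3,2)}$ (indeed, products of three length-$2$ commutators survive even modulo $T^{(3,2)}$ --- they occur in the basis ${\mathcal D}'_4$ in Section 5). Their \emph{sum} does lie in $T^{(3,2)}$, but proving it lies in $T^{(4)}+I^{(3,2)}$ at this point would require essentially the statement you are in the middle of proving, so the argument as written is circular at that step.

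The obstruction comes entirely from your choice to permute the composite monomial into the first slot of the length-$3$ commutator. The paper uses (\ref{permutation2}) to move it into the last slot of the length-$2$ factor instead: writing $a_5=a_5'a_5''$, a single application of the Leibniz rule gives $[a_4,a_5'a_5'']=a_5'[a_4,a_5'']+[a_4,a_5']a_5''$, hence
\[
[a_1,a_2,a_3][a_4,a_5]\equiv a_5'[a_1,a_2,a_3][a_4,a_5'']+[a_1,a_2,a_3][a_4,a_5']a_5'' \pmod{T^{(4)}},
\]
with no cross terms, and both summands are handled by the induction hypothesis. If you redirect your permutation so that the monomial you split sits in position $4$ or $5$ (or, equivalently, split only inside the length-$2$ commutator), your proof goes through and coincides with the paper's.
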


\begin{proof}
Since $S \subset T^{(3,2)}$, we have $T^{(4)} + I^{(3,2)} \subset
T^{(3,2)}$. Hence, it remains to check that $T^{(3,2)} \subset
T^{(4)} + I^{(3,2)}$. Since, modulo $T^{(4)}$, the ideal
$T^{(3,2)}$ is generated by the elements $[a_1, a_2, a_3] [a_4,
a_5]$ $(a_i \in \mathbb Z \langle X \rangle )$, it suffices to
check that $[a_1, a_2, a_3] [a_4, a_5] \in T^{(4)} + I^{(3,2)}$
for all $a_i$. Clearly, one can assume that $a_i$ are monomials.

We use induction on degree of the polynomial $f = [a_1, a_2, a_3]
[a_4, a_5]$. If $\sum_{i=1}^5 \mbox{deg } a_i = 5$ then $f =
[x_{i_1}, x_{i_2}, x_{i_3}][ x_{i_4}, x_{i_5}]$. If $i_1, \dots ,
i_5$ are all distinct then, by (\ref{permutation2}), $f \equiv s
\pmod{T^{(4)}}$ for some $s \in S$ so $f \in T^{(4)} + I^{(3,2)}$.
If, on the other hand, $i_k = i_l$ for some $k \ne l$ then, by
(\ref{permutation2}), $f \equiv -f \pmod{T^{(4)}}$, that is, $2 f
\in T^{(4)}$. Since, by Lemma \ref{in}, $3 f \in T^{(4)}$, we have
$f \in T^{(4)}$.

Now suppose that $\sum_{i=1}^5 \mbox{deg } a_i = k > 5$. Suppose
that for all monomials $b_1, \dots , b_5 \in \mathbb Z \langle X
\rangle$ such that $\sum_{i=1}^5 \mbox{deg } b_i  < k$ it has been
already proved that $[b_1, b_2, b_3] [b_4, b_5] \in T^{(4)} +
I^{(3,2)}$. For some $i$, $1 \le i \le 5$, we have $a_i = a_i'
a_i''$ where $\mbox{deg }a_i', \mbox{deg }a_i'' < \mbox{deg }a_i$.
By (\ref{permutation2}), we may assume without loss of generality
that $i = 5$. Then
\begin{multline*}
[a_1, a_2, a_3] [a_4, a_5] = [a_1, a_2, a_3] [a_4, a_5' a_5''] =
[a_1, a_2, a_3] a_5'[a_4, a_5''] \\ + [a_1, a_2, a_3] [a_4, a_5']
a_5'' \equiv a_5'[a_1, a_2, a_3] [a_4, a_5''] + [a_1, a_2, a_3]
[a_4, a_5'] a_5'' \pmod{T^{(4)}}.
\end{multline*}
By the inductive hypothesis, $[a_1, a_2, a_3] [a_4, a_5''], [a_1,
a_2, a_3] [a_4, a_5'] \in  T^{(4)} + I^{(3,2)}$ so $[a_1, a_2,
a_3] [a_4, a_5] \in T^{(4)} + I^{(3,2)}$, as required.
\end{proof}

Let $\xi : \mathbb Z \langle X \rangle \rightarrow \mathbb Z
\langle X_4 \rangle$ be the projection such that $\xi (x_i) = x_i$
if $i \le 4$ and $\xi (x_i) = 0$ otherwise. Note that $\xi (
T^{(3,2)} ) = T_4^{(3,2)}$, $\xi (T^{(4)}) = T_4^{(4)}$. Since
$\xi (S) = 0$, we have $\xi (I^{(3,2)}) = 0$ so, by Lemma
\ref{T32},
\[
T_4^{(3,2)} = \xi (T^{(3,2)}) = \xi (T^{(4)} + I^{(3,2)}) = \xi
(T^{(4)}) + \xi (I^{(3,2)}) = \xi (T^{(4)}) = T_4^{(4)}.
\]
It follows that $T_m^{(3,2)} = T_m^{(4)}$ for all $m$ such that $2
\le m \le 4$. The proof of Proposition \ref{T44T324} is completed.

\bigskip \noindent
\textbf{Acknowledgements.} The author is grateful to Alexander
Grishin, Plamen Koshlukov and Dimas Jos\'e Gon\c{c}alves for
useful discussions. The author was partially supported by CNPq, by
DPP/UnB, by CNPq-FAPDF PRONEX grant 2009/00091-0
(193.000.580/2009) and by RFBR grant 11-01-00945.


\begin{thebibliography}{99}

\bibitem{BJ10}
Asilata Bapat, David Jordan, \textit{Lower central series of free
algebras in symmetric tensor categories}, arXiv:1001.1375v3

\bibitem{BEJKL12}
Surya Bhupatiraju, Pavel Etingof, David Jordan, William Kuszmaul
and Jason Li, \textit{Lower central series of a free associative
algebra over the integers and finite fields}, arXiv:1203.1893v1

\bibitem{Drenskybook}
V. Drensky, \textit{Free algebras and PI-algebras}. Graduate
course in  algebra, Springer, Singapore, 1999.

\bibitem{Drensky84}
V. Drensky, \textit{Codimensions of $T$-ideals and Hilbert series
of relatively free algebras}, Journal of Algebra, \textbf{91}
(1984), 1--17.

\bibitem{EKM09}
Pavel Etingof, John Kim, Xiaoguang Ma, \textit{On universal Lie
nilpotent associative algebras}, Journal of Algebra, \textbf{321}
(2009), 697--703. arXiv:0805.1909v1

\bibitem{FS07}
Boris Feigin, Boris Shoikhet, \textit{On [A,A]/[A, [A,A]] and on a
$W_n$-action on the consecutive commutators of free associative
algebras}, Math. Res. Lett. \textbf{14} (2007), 781--795.
arXiv:math/0610410v2

\bibitem{Gordienko07}
A.S. Gordienko, \textit{Codimensions of commutators of length
$4$}, Russian Mathematical Surveys \textbf{62} (2007), 187--188.

\bibitem{Latyshev65}
V.N. Latyshev, \textit{On finite generation of a $T$-ideal with
the element $[x_1, x_2, x_3, x_4]$}, Siberian Mathematical
Journal, \textbf{6} (1965), 1432--1434. (in Russian)

\bibitem{Volichenko78}
I.B. Volichenko, \textit{The $T$-ideal generated by the element
$[x_1, x_2, x_3, x_4]$}, Preprint no. 22, Institute of Mathematics
of the Academy of Sciences of the Belorussian SSR, 1978. (in
Russian)

\end{thebibliography}
\end{document}